\DeclareMathOperator{\con}{con}
\DeclareMathOperator{\simple}{sim}
\DeclareMathOperator{\mul}{mul}
\DeclareMathOperator{\occ}{occ}
\DeclareMathOperator{\var}{var}
\newtheorem{theorem}{Theorem}[section]
\newtheorem{proposition}{Proposition}[section]
\newtheorem{lemma}{Lemma}[section]
\numberwithin{equation}{section}
\renewcommand*\subjclass[2][2010]{\def\@subjclass{#2}\@ifundefined{subjclassname@#1}{\ClassWarning{\@classname}{Unknown edition (#1) of Mathematics Subject Classification; using '2010'.}}{\@xp\let\@xp\subjclassname\csname subjclassname@#1\endcsname}}
\renewcommand{\subjclassname}{\textup{2010} Mathematics Subject Classification}
\begin{document}

\title[Limit varieties of aperiodic monoids with commuting idempotents]{Limit varieties of aperiodic monoids\\ with commuting idempotents}
\thanks{The work is supported by the Ministry of Science and Higher Education of the Russian Federation (project FEUZ-2020-0016).}

\author{S.V.Gusev}

\address{Ural Federal University, Institute of Natural Sciences and Mathematics, Lenina 51, 620000 Ekaterinburg, Russia}

\email{sergey.gusb@gmail.com}

\begin{abstract}
A variety of algebras is called limit if it is non-finitely based but all its proper subvarieties are finitely based. A monoid is aperiodic if all its subgroups are trivial. We classify all limit varieties of aperiodic monoids with commuting idempotents.
\end{abstract}

\keywords{Monoid, variety, limit variety, finite basis problem}

\subjclass{20M07}

\maketitle

\section{Introduction and summary}
\label{introduction}

A variety of algebras is called \emph{finitely based} if it has a finite basis of its identities, otherwise, the variety is said to be \emph{non-finitely based}. Much attention is paid to studying of finitely based and non-finitely based varieties of algebras of various types. In particular, the finitely based and non-finitely based varieties of semigroups and monoids have been the subject of an intensive research (see the surveys~\cite{Shevrin-Volkov-85,Volkov-01}).

A variety is \emph{hereditarily finitely based} if all its subvarieties are finitely based. A variety is called a \emph{limit variety} if it is non-finitely based but every its proper subvariety is finitely based. The limit varieties play an important role because every non-finitely based variety contains some limit subvariety by Zorn's lemma. It follows that a variety is hereditarily finitely based if and only if it does not contain any limit variety. So, if one manages to classify all limit varieties within some class of varieties then this classification implies a description of all hereditarily finitely based varieties in this class.

We consider varieties of monoids as semigroups equipped with an additional 0-ary operation that fixes the identity element. A monoid is \emph{aperiodic} if all its subgroups are trivial. The article is devoted to study the limit varieties within the class $\mathbf A_\mathsf{com}$ of aperiodic monoids with commuting idempotents. In~\cite{Jackson-05}, Jackson found the first two examples of limit monoid varieties $\mathbf L$ and $\mathbf M$. It turned out that $\mathbf L$ and $\mathbf M$ lie in $\mathbf A_\mathsf{com}$. Lee established that only $\mathbf L$ and $\mathbf M$ are limit varieties within several classes of monoid varieties~\cite{Lee-09,Lee-12}. In particular, he proved in~\cite{Lee-12} the uniqueness of the limit varieties $\mathbf L$ and $\mathbf M$ in an important subclass of $\mathbf A_\mathsf{com}$, namely, in the class of varieties of aperiodic monoids with central idempotents. Just recently, the third example of a limit variety $\mathbf J$ from $\mathbf A_\mathsf{com}$ was found in~\cite{Gusev-19+}. In this article, we completely classify all limit varieties within the class $\mathbf A_\mathsf{com}$.

In order to formulate the main result of the article, we need some definitions and notation. The free monoid over a countably infinite alphabet is denoted by $F^1$. As usual, elements of $F^1$ and elements of the alphabet are called \emph{words} and \emph{letters} respectively. Words and letters are denoted by small Latin letters. However, words unlike letters are written in bold. The following construction was used by Perkins~\cite{Perkins-69} to build the first two examples of non-finitely based finite semigroups. For any set of words $W=\{\mathbf w_1,\mathbf w_2,\dots,\mathbf w_k\}$, let $S(\mathbf w_1,\mathbf w_2,\dots,\mathbf w_k)$ denote the Rees quotient monoid of $F^1$ over the ideal of all words that are not subwords of any word in $W$. The above-mentioned varieties $\mathbf L$ and $\mathbf M$ are introduced as the varieties generated by the monoid of such a form. Namely, $\mathbf L$ and $\mathbf M$ denote the varieties generated by the monoids $S(xzxyty)$ and $S(xyzxty,xtyzxy)$ respectively.

To introduce the remaining limit varieties from $\mathbf A_\mathsf{com}$, we need some more definitions and notation. Expressions like to $\mathbf u\approx\mathbf v$ are used for identities, whereas $\mathbf{u=v}$ means that the words $\mathbf u$ and $\mathbf v$ coincide. As usual, the symbol $\mathbb N$ stands for the set of all natural numbers. For an arbitrary $n\in\mathbb N$, we denote by $S_n$ the full symmetric group on the set $\{1,2,\dots,n\}$. The above-mentioned variety $\mathbf J$ is given by the identities
\begin{align}
\label{xyx=xyxx} 
xyx&\approx xyx^2,\\
\label{xxyy=yyxx} 
x^2y^2&\approx y^2x^2,\\
\label{xyzxy=yxzxy} 
xyzxy&\approx yxzxy,\\
\label{xyxztx=xyxzxtx}
xyxztx&\approx xyxzxtx,\\
\notag 
x\, z_{1\pi}z_{2\pi}\cdots z_{n\pi}\, x\, \biggl(\,\prod_{i=1}^n t_iz_i\biggr)&\approx x^2\, z_{1\pi}z_{2\pi}\cdots z_{n\pi}\, \biggl(\,\prod_{i=1}^n t_iz_i\biggr)
\end{align}
where $n$ ranges over $\mathbb N$ and $\pi$ ranges over $S_n$. If $\mathbf X$ is a monoid variety then we denote by $\overleftarrow{\mathbf X}$ the variety \emph{dual to} $\mathbf X$, i.e., the variety consisting of monoids antiisomorphic to monoids from $\mathbf X$.

The main result of the paper is the following

\begin{theorem}
\label{main result}
The only limit subvarieties of the class $\mathbf A_\mathsf{com}$ are $\mathbf L$, $\mathbf M$, $\mathbf J$ and $\overleftarrow{\mathbf J}$. 
\end{theorem}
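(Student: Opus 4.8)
The plan is to prove this classification theorem by combining an upper-bound argument (showing these four varieties are indeed limit) with a lower-bound/exhaustiveness argument (showing no others exist). The fact that $\mathbf L$, $\mathbf M$, $\mathbf J$ are limit varieties in $\mathbf A_\mathsf{com}$ is known from the cited works of Jackson and Gusev, and $\overleftarrow{\mathbf J}$ is limit because the property of being a limit variety is preserved under duality and $\mathbf A_\mathsf{com}$ is self-dual. So the entire weight of the proof falls on the exhaustiveness direction: I must show that \emph{every} limit variety contained in $\mathbf A_\mathsf{com}$ coincides with one of these four.

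First I would fix an arbitrary limit variety $\mathbf V \subseteq \mathbf A_\mathsf{com}$ and extract structural consequences from the fact that $\mathbf V$ is non-finitely based while all its proper subvarieties are finitely based. The natural engine here is the theory of identities in monoids built from Rees quotients $S(\mathbf w_1,\dots,\mathbf w_k)$ and the associated combinatorics of words: the identities \eqref{xyx=xyxx}, \eqref{xxyy=yyxx}, \eqref{xyzxy=yxzxy}, \eqref{xyxztx=xyxzxtx} that define $\mathbf J$ encode precisely the word-rewriting behavior available inside $\mathbf A_\mathsf{com}$. I would set up invariants on words — such as the ordered sequence of first and last occurrences of letters, the multiset of letters occurring at least twice (the macros $\simple$, $\mul$, $\occ$, $\var$ suggest the paper tracks simple versus multiple occurrences and numbers of occurrences) — and determine which of these invariants are preserved by the identities valid in $\mathbf V$. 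The aim is a normal-form or canonical-word description of the free object in $\mathbf V$ that makes the deviation structure of $\mathbf V$ visible.

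Next I would carry out a case analysis driven by whether certain benchmark identities hold in $\mathbf V$. The decisive dichotomy is whether $\mathbf V$ satisfies the central-idempotent identity $xyx \approx x^2 y$ (or its consequences), since Lee's theorem from~\cite{Lee-12} already settles the classification in the aperiodic-central-idempotent case, forcing $\mathbf V \in \{\mathbf L, \mathbf M\}$. In the complementary case, where idempotents commute but are not central, I expect the extra freedom to be governed exactly by the symmetric-group-indexed identity family defining $\mathbf J$, and the handedness (left versus right) of how multiple occurrences interact will separate $\mathbf J$ from $\overleftarrow{\mathbf J}$. Concretely, for each subcase I would exhibit a finite basis for the identities of $\mathbf V$ \emph{unless} $\mathbf V$ already equals one of the four target varieties; since $\mathbf V$ is assumed non-finitely based, every finitely based subcase is vacuous, and the analysis collapses onto $\mathbf L$, $\mathbf M$, $\mathbf J$, $\overleftarrow{\mathbf J}$.

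The hard part will be the middle step: establishing the finite-basis results in the ``generic'' subcases, i.e., showing that any variety in $\mathbf A_\mathsf{com}$ which fails to contain any of the four limit varieties is hereditarily finitely based. This requires controlling the full lattice of subvarieties and proving finite axiomatizability uniformly across infinitely many configurations of word invariants — the technically most demanding part, since non-finite-basability is delicate and a single overlooked family of identities could hide another limit variety. I would manage this by proving a containment criterion of the form ``$\mathbf V$ contains $\mathbf L$, $\mathbf M$, $\mathbf J$, or $\overleftarrow{\mathbf J}$ if and only if $\mathbf V$ violates a specific finite list of identities,'' and then verifying that every variety violating none of those identities is finitely based via an explicit basis together with a deducibility argument. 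The duality symmetry between $\mathbf J$ and $\overleftarrow{\mathbf J}$ should roughly halve the casework, and Lee's prior classification provides the essential base case anchoring the induction.
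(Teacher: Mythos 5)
Your logical skeleton---the four varieties are limit by the cited works together with duality, so the real content is exhaustiveness---matches the paper, but two of your concrete choices do not survive contact with the details. First, your pivot identity is wrong: $xyx\approx x^2y$ is not the central-idempotent condition (for an aperiodic variety, centrality of idempotents amounts to an identity of the form $x^ny\approx yx^n$), and in fact $\mathbf L$ itself fails $xyx\approx x^2y$: in $S(xzxyty)$, substituting $z$ for $y$ gives $xzx\ne 0=x^2z$, and $\mathbf M$ fails it similarly. So a case split on your identity would throw $\mathbf L$ and $\mathbf M$ into your ``complementary'' case, where Lee's central-idempotent classification is of no use. The dichotomy the paper actually runs on is whether $S(xyx)\in\mathbf V$: if yes, then $\mathbf V\in\{\mathbf L,\mathbf M\}$ by \cite[Theorem~3.2]{Lee-12}; if no, then Lemma~\ref{S(xyx) notin V} (resting on \cite[Lemma~5.3]{Jackson-Sapir-00}) produces an identity $xyx\approx x^pyx^q$ and hence \eqref{xyx=xyxx} or its dual \eqref{xyx=xxyx}---and it is this left/right choice, not ``how multiple occurrences interact'', that separates $\mathbf J$ from $\overleftarrow{\mathbf J}$.

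Second, you correctly isolate the hard step---every variety in $\mathbf A_\mathsf{com}$ containing none of the four is finitely based---but you offer no mechanism for it; ``an explicit basis together with a deducibility argument'' restates the problem rather than attacking it. The paper's mechanism is quite specific. Once \eqref{xyx=xyxx} and \eqref{xxyy=yyxx} are secured, either $\mathbf E\nsubseteq\mathbf V$ or $\mathbf F\nsubseteq\mathbf V$, in which case $\mathbf V$ lies in one of the hereditarily finitely based varieties $\mathbf K$, $\mathbf Q$ (Lemmas~\ref{K and Q} and~\ref{E and F nsubseteq X}); or $\mathbf F\vee\mathbf E\subseteq\mathbf V$, and then, if $\mathbf V\ne\mathbf J$, Lemma~\ref{does not contain J}---whose proof needs Sapir's $\tau$-term machinery and the realization of $\mathbf J$ as the variety generated by $S_\tau(\{xzyx^kty^\ell\mid k,\ell\in\mathbb N\}^\le)$, via Lemma~\ref{S_tau in V} and \cite{Sapir-20+}---forces the identity \eqref{xzxyxty=xzyxty}, i.e., $\mathbf V\subseteq\mathbf O$. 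The heart of the matter is then Proposition~\ref{O is HFB}: $\mathbf O$ is hereditarily finitely based, proved by reducing arbitrary identities over $\mathbf O$ to well-balanced ones (Lemma~\ref{reduction to balanced identities}) and then, by induction on invertibility, to the explicit families $\alpha_n,\beta_n,\gamma_n,\gamma_n'$ of Lemma~\ref{basis for subvarieties of O}, each of which collapses to finitely many identities. Your proposal contains no candidate for the bounding variety $\mathbf O$, no substitute for the criterion ``$\mathbf J\nsubseteq\mathbf V$ implies a concrete identity'', and no identity templates to induct on; without these, the assertion that ``every finitely based subcase is vacuous'' has nothing to quantify over, and a fifth limit variety is not excluded.
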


Theorem~\ref{main result} shows that there are only four limit varieties within the class $\mathbf A_\mathsf{com}$. By contrast, Kozhevnikov proves in~\cite{Kozhevnikov-12} that there are continuum many limit varieties of periodic groups. As for the aperiodic monoid varieties, it is known only one example of a limit variety of such a type that does not lie in $\mathbf A_\mathsf{com}$. This example was provided just recently by Zhang and Luo~\cite{Zhang-Luo-19+}. 

The article consists of four sections. Section~\ref{preliminaries} contains definitions, notation and several known auxiliary results. Section~\ref{var O} is devoted to the proof of the fact that the monoid variety $\mathbf O$ given by the identities~\eqref{xxyy=yyxx} and 
\begin{equation}
\label{xzxyxty=xzyxty}
xzxyxty\approx xzyxty
\end{equation}
is hereditarily finitely based. The proof of Theorem~\ref{main result} is given in Section~\ref{proof of Theorem}.

\section{Preliminaries}
\label{preliminaries}

A variety of monoids is called \emph{completely regular} if it consists of \emph{completely regular monoids}~(i.e., unions of groups). \emph{Band} is a semigroup (monoid) in which every element is an idempotent. If $\mathbf u$ and $\mathbf v$ are words and $\varepsilon$ is an identity then we will write $\mathbf u\stackrel{\varepsilon}\approx\mathbf v$ in the case when the identity $\mathbf u\approx\mathbf v$ follows from $\varepsilon$.

\begin{lemma}
\label{S(xyx) notin V}
Let $\mathbf V$ be a variety of aperiodic monoids that does not contain $S(xyx)$.
Suppose that $\mathbf V$ is not hereditarily finitely based. Then $\mathbf V$ satisfies either~\eqref{xyx=xyxx} or 
\begin{equation}
\label{xyx=xxyx}
xyx\approx x^2yx.
\end{equation}
\end{lemma}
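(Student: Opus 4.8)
The plan is to exploit the standard correspondence recalling that $S(\mathbf w)\in\mathbf V$ if and only if $\mathbf w$ is an isoterm for $\mathbf V$. Since $\mathbf V$ does not contain $S(xyx)$, the word $xyx$ is not an isoterm, so $\mathbf V$ satisfies a nontrivial identity $xyx\approx\mathbf w$ with $\mathbf w\neq xyx$. First I would dispose of the degenerate possibilities: $\mathbf V$ is neither trivial, nor a variety of bands, nor commutative, because each such variety is hereditarily finitely based (trivially in the first case, and by the known finite basis theorems for band and for commutative varieties), contradicting the hypothesis. Using nontriviality together with the substitutions that send all letters but one to the empty word, I would then read off the structure of $\mathbf w$: one gets $\con(\mathbf w)=\{x,y\}$, and moreover $\simple(\mathbf w)=\{y\}$ with $x$ multiple in $\mathbf w$. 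Indeed, if $y$ were multiple in $\mathbf w$, or $x$ simple in $\mathbf w$, or some third letter occurred, the corresponding one-letter substitution would force $\mathbf V\models z\approx z^2$ (a band) or $\mathbf V\models z\approx 1$ (trivial), via the aperiodicity identity $x^n\approx x^{n+1}$. Hence $\mathbf w=x^{a}yx^{b}$ with $a+b\ge 2$ and $(a,b)\neq(1,1)$.

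Substituting $y\mapsto 1$ into $xyx\approx x^{a}yx^{b}$ yields $x^2\approx x^{a+b}$, and aperiodicity guarantees that all powers $x^{m}$ with $m\ge n$ coincide. The argument now splits on the value of $a+b$. If $a+b\ge 3$, then adding $a+b-2\ge 1$ to the exponent repeatedly in $x^2\approx x^{a+b}$ reaches an exponent $\ge n$, so $\mathbf V\models x^2\approx x^3$; using this to collapse every block $x^{a}$ or $x^{b}$ of exponent $\ge 2$ to $x^2$, the identity $xyx\approx x^{a}yx^{b}$ reduces to one of the short forms $xyx\approx x^2y$, $yx^2$, $x^2yx$, $xyx^2$ or $x^2yx^2$. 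In each of these a one-line manipulation with $x^2\approx x^3$ (for example $xyx^2=(xyx)x\approx (x^2yx^2)x=x^2yx^3\approx x^2yx^2\approx xyx$) delivers \eqref{xyx=xyxx} or \eqref{xyx=xxyx}, settling the case $a+b\ge 3$.

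The remaining case $a+b=2$ forces $\mathbf w\in\{x^2y,\ yx^2\}$, and this is where the real difficulty lies. Treating $\mathbf w=x^2y$ (the word $yx^2$ being dual), the identity $xyx\approx x^2y$ is, after substitution, equivalent to $\mathbf p\mathbf q\mathbf p\approx\mathbf p^2\mathbf q$ for all words $\mathbf p,\mathbf q$, and a direct check shows that modulo it each of \eqref{xyx=xyxx} and \eqref{xyx=xxyx} is equivalent to $x^2y\approx x^3y$, i.e. to $x^2\approx x^3$. Consequently, if $\mathbf V\models x^2\approx x^3$ we again obtain a target identity and are finished; the genuine obstacle is the subcase $\mathbf V\not\models x^2\approx x^3$, in which (as one sees from the relatively free monoid, which is noncommutative and has $x^2\neq x^3$) neither \eqref{xyx=xyxx} nor \eqref{xyx=xxyx} can hold.

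To close this last subcase the plan is to prove that every aperiodic variety satisfying $xyx\approx x^2y$ is hereditarily finitely based, thereby contradicting the assumption that $\mathbf V$ is not hereditarily finitely based and eliminating the subcase altogether. The intuition is that the rewriting $\mathbf p\mathbf q\mathbf p\approx\mathbf p^2\mathbf q$ lets one pull every repeated factor to the front, forcing a tame, $\mathcal R$-trivial-like normal form on the relatively free monoids from which a finite basis can be extracted; I expect this finite basis property for the shift-word varieties $[xyx\approx x^2y]$ and $[xyx\approx yx^2]$ to be the crux of the whole proof, whereas the exponent bookkeeping of the first two paragraphs is routine.
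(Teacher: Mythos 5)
Your reduction is sound and, up to the final case, follows the same skeleton as the paper's proof: from $S(xyx)\notin\mathbf V$ you obtain a nontrivial identity $xyx\approx\mathbf w$; elementary substitutions (together with aperiodicity and the finite basis theorems for band and trivial varieties) force $\mathbf w=x^ayx^b$ with $a+b\ge2$ and $(a,b)\ne(1,1)$; and when $a+b\ge3$ you correctly derive $x^2\approx x^3$ and from it one of the target identities \eqref{xyx=xyxx} or \eqref{xyx=xxyx}. (The paper obtains the form of $\mathbf w$ by citing \cite{Gusev-Vernikov-18}, so your substitution analysis is, if anything, more self-contained at that point.)

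The genuine gap is in the remaining case $\mathbf w\in\{x^2y,\,yx^2\}$ when $x^2\approx x^3$ fails, which you yourself identify as the crux and then do not prove: you only ``expect'' that every aperiodic variety satisfying $xyx\approx x^2y$ (or dually $xyx\approx yx^2$) is hereditarily finitely based, offering a heuristic about an $\mathcal R$-trivial-like normal form. That expectation is exactly Poll\'ak's theorem \cite{Pollak-82}, which states that every variety satisfying $xyx\approx yx^2$ is finitely based (no aperiodicity needed; hereditariness then follows since subvarieties inherit the identity), and it is precisely what the paper invokes at the corresponding point of its own proof (its case $p=0$, where the identity reduces to $xyx\approx yx^2$ and Poll\'ak's result yields a contradiction with the hypothesis that $\mathbf V$ is not hereditarily finitely based). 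This is a deep result --- the subject of an entire paper on hereditarily finitely based identities --- and it does not follow from routine normal-form bookkeeping on the relatively free monoid; stated without proof or citation it leaves your argument incomplete at its single essential step. With Poll\'ak's theorem quoted, your proof closes and becomes essentially the paper's argument with a slightly different case split: the paper applies Poll\'ak's result to every identity $xyx\approx yx^q$ with $q\ge2$, whereas you apply it only in the subcase where $x^2\approx x^3$ is unavailable and handle all other configurations by direct computation.
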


\begin{proof}
According to~\cite[Lemma 5.3]{Jackson-Sapir-00}, $\mathbf V$ satisfies a non-trivial identity of the form $xyx\approx \mathbf w$. A completely regular variety of aperiodic monoids is a variety of bands. Since all varieties of band monoids are finitely based~\cite{Wismath-86}, $\mathbf V$ is non-completely regular. Then $\mathbf w =x^pyx^q$ for some $p$ and $q$ such that $p\ge 2$ or $q\ge 2$ by~\cite[Proposition 2.2 and Corollary 2.6]{Gusev-Vernikov-18}. By symmetry, we may assume that $q\ge 2$. 

Suppose at first that $p=0$. Then $\mathbf V$ satisfies the identity $x^2\approx x^q$ and, therefore, the identity $xyx\approx yx^2$. It follows from~\cite{Pollak-82} that every variety that satisfies the latest identity is finitely based. We obtain a contradiction with the hypothesis. 

Suppose now that $p\ge 1$. Then $\mathbf V$ satisfies the identity $x^2\approx x^{p+q}$ and, therefore, the identity
\begin{equation}
\label{xx=xxx}
x^2\approx x^3
\end{equation}
because $\mathbf V$ is aperiodic. Then the identities
$$
xyx\approx x^pyx^q\stackrel{\eqref{xx=xxx}}\approx x^pyx^{q+1}\approx xyx^2
$$
hold in $\mathbf V$. Thus,~\eqref{xyx=xyxx} is satisfied in $\mathbf V$.
\end{proof}

For an identity system $\Sigma$, we denote by $\var\,\Sigma$ the variety of monoids given by $\Sigma$. Let us fix notation for the following two varieties:
$$
\begin{aligned}
&\mathbf K=\var\{\eqref{xyx=xyxx},\,~\eqref{xxyy=yyxx},\,x^2y\approx x^2yx\},\\
&\mathbf Q=\var\{\eqref{xyx=xyxx},\,~\eqref{xxyy=yyxx},\,\eqref{xyx=xxyx}\}.
\end{aligned}
$$

The following claim follows from~\cite[Proposition 6.1]{Gusev-Vernikov-18} for the variety $\mathbf K$ and from~\cite[Condition 4 on page 8]{Lee-Li-11} for the variety $\mathbf Q$.

\begin{lemma}
\label{K and Q}
The varieties $\mathbf K$ and $\mathbf Q$ are hereditarily finitely based.\qed
\end{lemma}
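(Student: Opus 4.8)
The plan is to recognise $\mathbf K$ and $\mathbf Q$ as members of families of monoid varieties whose finite basis behaviour has already been settled, and to invoke those results directly. I would first observe that both varieties satisfy $x^2\approx x^3$: since we work with monoids, substituting $1$ for $y$ in~\eqref{xyx=xyxx} is legitimate and yields $x^2\approx x^3$, so both $\mathbf K$ and $\mathbf Q$ are aperiodic. I would then match the defining system of $\mathbf K$, namely $\{\eqref{xyx=xyxx},\eqref{xxyy=yyxx},x^2y\approx x^2yx\}$, against the varieties treated in~\cite{Gusev-Vernikov-18}, checking that it is equivalent as an identity basis to the system appearing in their Proposition~6.1; likewise I would match $\mathbf Q=\var\{\eqref{xyx=xyxx},\eqref{xxyy=yyxx},\eqref{xyx=xxyx}\}$ with the variety singled out by Condition~4 in~\cite{Lee-Li-11}. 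Once the bases are seen to coincide up to logical equivalence, hereditary finite basedness is immediate from those sources.

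For a self-contained argument one would instead proceed structurally. The first step is to develop a canonical form for words in the relatively free monoid of each variety. Using~\eqref{xyx=xyxx}, the derived law $x^2\approx x^3$, the commutation identity~\eqref{xxyy=yyxx}, and the extra defining identity, one separates each word into its simple part (letters occurring exactly once) and its multiple part (letters occurring at least twice), and shows that the multiple letters behave like a commutative idempotent block whose position relative to the simple letters is tightly constrained; in $\mathbf Q$ the identity~\eqref{xyx=xxyx} plays the role dual to the stabilising identity $x^2y\approx x^2yx$ of $\mathbf K$. The outcome is that each word is determined, modulo the identities, by a small amount of combinatorial data: essentially the sequence of first occurrences of its simple letters together with the set of multiple letters and its coarse placement.

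The second step is to use this normal form to control the subvarieties. I would argue that every identity holding in an arbitrary subvariety is a consequence of identities involving a bounded number of letters and of bounded length, so that each subvariety has a finite basis; in the most favourable case one shows outright that the lattice of subvarieties is finite and exhibits an explicit finite basis for each member. The main obstacle is exactly this normalisation-and-deduction lemma: proving that the rewriting rules generated by the defining identities are confluent enough to produce genuine canonical forms, and then bounding the length of the identities needed to derive an arbitrary identity of the variety. This combinatorial core is precisely what the cited classifications establish once and for all, which is why invoking~\cite{Gusev-Vernikov-18} and~\cite{Lee-Li-11} is the efficient route.
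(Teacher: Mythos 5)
Your first paragraph is exactly the paper's own proof: the lemma is established there purely by citing \cite[Proposition 6.1]{Gusev-Vernikov-18} for $\mathbf K$ and \cite[Condition 4 on page 8]{Lee-Li-11} for $\mathbf Q$, which is precisely the matching-and-invoking step you describe. The additional ``self-contained'' sketch you append is not needed (and is only a heuristic outline, not a proof), but since your primary route coincides with the paper's, the proposal is correct.
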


Put
$$
\begin{aligned}
&\mathbf E=\var\{\eqref{xxyy=yyxx},\,\eqref{xx=xxx},\,yx^2\approx xyx\},\\
&\mathbf F=\var\{\eqref{xyx=xyxx},\,~\eqref{xxyy=yyxx},\,\eqref{xyzxy=yxzxy},\,x^2y\approx x^2yx\}.
\end{aligned}
$$
To avoid a confusion below, we note that, in~\cite{Gusev-19+,Gusev-Vernikov-18}, the variety $\mathbf E$ is denoted by $\overleftarrow{\mathbf E}$ and $\overleftarrow{\mathbf E}$ denotes the variety $\mathbf E$, while, in~\cite{Gusev-Vernikov-18}, the variety $\mathbf F$ is denoted by $\mathbf F_1$.

\begin{lemma}
\label{E and F nsubseteq X}
Let $\mathbf V$ be a variety of monoids that satisfies the identities~\eqref{xyx=xyxx} and~\eqref{xxyy=yyxx}.
\begin{itemize}
\item[\textup{(i)}]If $\mathbf E\nsubseteq \mathbf V$ then $\mathbf V\subseteq\mathbf K$. \item[\textup{(ii)}]If $\mathbf F\nsubseteq \mathbf V$ then $\mathbf V\subseteq\mathbf Q$.
\end{itemize}
\end{lemma}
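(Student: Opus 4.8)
The plan is to prove both parts by contraposition, first reducing each to the failure of a single identity. Putting $y=1$ in~\eqref{xyx=xyxx} gives $x^2\approx x^3$, so $\mathbf V$ is aperiodic of index at most two and, together with~\eqref{xxyy=yyxx}, already satisfies two of the three defining identities of each of $\mathbf K$ and $\mathbf Q$. Since $\mathbf K=\var\{\eqref{xyx=xyxx},\eqref{xxyy=yyxx},x^2y\approx x^2yx\}$ and $\mathbf Q=\var\{\eqref{xyx=xyxx},\eqref{xxyy=yyxx},\eqref{xyx=xxyx}\}$, a variety satisfying~\eqref{xyx=xyxx} and~\eqref{xxyy=yyxx} lies in $\mathbf K$ exactly when it satisfies $x^2y\approx x^2yx$, and lies in $\mathbf Q$ exactly when it satisfies~\eqref{xyx=xxyx}. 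Thus (i) and (ii) are equivalent to the following two assertions: if $\mathbf V$ violates $x^2y\approx x^2yx$ then $\mathbf E\subseteq\mathbf V$, and if $\mathbf V$ violates~\eqref{xyx=xxyx} then $\mathbf F\subseteq\mathbf V$.

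For each assertion I would establish a \emph{minimal violator} statement: $\mathbf E$ is contained in every subvariety of $\var\{\eqref{xyx=xyxx},\eqref{xxyy=yyxx}\}$ that fails $x^2y\approx x^2yx$, and $\mathbf F$ is contained in every such subvariety that fails~\eqref{xyx=xxyx}. Concretely, it suffices to exhibit inside $\mathbf V$ a monoid generating $\mathbf E$ (resp.\ $\mathbf F$). Given a monoid $M\in\mathbf V$ and elements witnessing the failing identity, say $a,b\in M$ with $a^2b\ne a^2ba$ (resp.\ $aba\ne a^2ba$), I would analyse the submonoid they generate, keeping for $\mathbf F$ a third generator to carry the linear letter $z$ of~\eqref{xyzxy=yxzxy}, and identify a suitable divisor of a finite power of $M$ with a generating monoid of $\mathbf E$ (resp.\ $\mathbf F$). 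The identities~\eqref{xyx=xyxx} and~\eqref{xxyy=yyxx} make this feasible, since they force a manageable normal form: every letter occurs linearly or as a square, cubes collapse by $x^2\approx x^3$, and squared blocks commute past one another by~\eqref{xxyy=yyxx}.

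The main obstacle is precisely this reconstruction step, that is, proving the two minimality statements. The work is combinatorial: one must describe the words modulo~\eqref{xyx=xyxx} and~\eqref{xxyy=yyxx} via the normal form above, and then run a case analysis over all possible nontrivial deformations of the critical word, showing that any one of them, in the presence of the witnessing inequality, already yields the forbidden identity $x^2y\approx x^2yx$ (resp.~\eqref{xyx=xxyx}). Alternatively, and this is the route I would try first, these two facts can be read off the description of the subvariety lattice of $\var\{\eqref{xyx=xyxx},\eqref{xxyy=yyxx}\}$ obtained in~\cite{Gusev-Vernikov-18}, in which $\mathbf E$ and $\mathbf F$ appear as the least varieties violating the respective identities; granting that description, (i) and (ii) follow at once from the reduction in the first paragraph.
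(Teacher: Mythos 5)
Your first paragraph is a correct but essentially trivial reformulation: since $\mathbf V$ satisfies \eqref{xyx=xyxx} and \eqref{xxyy=yyxx}, containment in $\mathbf K$ (resp.\ $\mathbf Q$) is indeed equivalent to satisfying $x^2y\approx x^2yx$ (resp.\ \eqref{xyx=xxyx}), so the lemma is equivalent to the two ``minimal violator'' statements you formulate. The gap is that you never actually prove those statements, and they are the entire content of the lemma. Your preferred route --- reading them off ``the description of the subvariety lattice of $\var\{\eqref{xyx=xyxx},\,\eqref{xxyy=yyxx}\}$ obtained in \cite{Gusev-Vernikov-18}'' --- rests on a result that does not exist: \cite{Gusev-Vernikov-18} classifies chain varieties of monoids and contains no description of this subvariety lattice. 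Nor could a transparent description be expected, since $\var\{\eqref{xyx=xyxx},\,\eqref{xxyy=yyxx}\}$ contains the non-finitely based limit variety $\mathbf J$, so its lattice of subvarieties is necessarily complicated. Your fallback route --- analysing the submonoid generated by elements $a,b$ witnessing the failure of the identity and locating a divisor generating $\mathbf E$ (resp.\ $\mathbf F$) --- is only a declaration of intent; you concede that ``the main obstacle is precisely this reconstruction step'' and carry out none of the case analysis, which is exactly the hard syntactic work.

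For comparison, the paper's proof first disposes of the completely regular case: there $\mathbf V$ is a variety of band monoids, which by \eqref{xxyy=yyxx} is commutative, hence a variety of semilattice monoids contained in $\mathbf K\wedge\mathbf Q$ by \cite[Lemma~2.1]{Gusev-Vernikov-18}. For non-completely regular $\mathbf V$ it then invokes two specific known results: the dual of Lemma~4.3 of \cite{Gusev-Vernikov-18}, which yields $x^2y\approx x^2yx^2$ in $\mathbf V$ when $\mathbf E\nsubseteq\mathbf V$, and the argument from the proof of Lemma~3.2 of \cite{Gusev-19+}, which yields $xyx^2\approx x^2yx^2$ when $\mathbf F\nsubseteq\mathbf V$; in each case \eqref{xyx=xyxx} converts the derived identity into the defining identity of $\mathbf K$ or $\mathbf Q$. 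These two citations are precisely the minimality facts you postulate. A complete proof must either reproduce those arguments or cite them correctly, and must also include the preliminary reduction to the non-completely regular case (needed before the cited lemmas apply), which your sketch omits entirely.
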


\begin{proof}
Since~\eqref{xyx=xyxx} implies~\eqref{xx=xxx}, $\mathbf V$ consists of aperiodic monoids. If $\mathbf V$ is completely regular then $\mathbf V$ is a variety of bands. Then $\mathbf V$ is commutative because it satisfies the identity~\eqref{xxyy=yyxx}. Thus, $\mathbf V$ is contained in the variety of all semilattice monoids. In view of~\cite[Lemma 2.1]{Gusev-Vernikov-18}, $\mathbf V\subseteq\mathbf K\wedge\mathbf Q$. So, we may assume that $\mathbf V$ is non-completely regular.

If $\mathbf E\nsubseteq \mathbf V$ then the identity $x^2y\approx x^2yx^2$ holds in $\mathbf V$ by~\cite[the dual to Lemma 4.3]{Gusev-Vernikov-18}. Then $\mathbf V$ satisfies the identities 
$$
x^2y\approx x^2yx^2\stackrel{\eqref{xyx=xyxx}}\approx x^2yx,
$$
whence $\mathbf V\subseteq \mathbf K$. The claim (i) is proved.

If $\mathbf F\nsubseteq \mathbf V$ then arguments from the third paragraph of the proof of Lemma 3.2 of~\cite{Gusev-19+} imply that the identity $xyx^2\approx x^2yx^2$ holds in $\mathbf V$. Then $\mathbf V$ satisfies the identities 
$$
x^2yx\stackrel{\eqref{xyx=xyxx}}\approx x^2yx^2\approx xyx^2\stackrel{\eqref{xyx=xyxx}}\approx xyx,
$$
whence $\mathbf V\subseteq \mathbf Q$. The claim (ii) is proved.
\end{proof}

A letter is called \emph{simple} [\emph{multiple}] \emph{in a word} $\mathbf w$ if it occurs in $\mathbf w$ once [at least twice]. The set of all simple [multiple] letters in a word \textbf w is denoted by $\simple(\mathbf w)$ [respectively, $\mul(\mathbf w)$]. The \emph{content} of a word \textbf w, i.e., the set of all letters occurring in $\mathbf w$, is denoted by $\con(\mathbf w)$. The number of occurrences of the letter $x$ in $\mathbf w$ is denoted by $\occ_x(\mathbf w)$. For a word \textbf w and letters $x_1,x_2,\dots,x_k\in \con(\mathbf w)$, let $\mathbf w(x_1,x_2,\dots,x_k)$ be the word obtained from $\mathbf w$ by deleting all letters except $x_1,x_2,\dots,x_k$. 

Now we are going to define several notions that appear in~\cite{Gusev-19+} and~\cite[Chapter 3]{Gusev-Vernikov-18}. Let $\mathbf w$ be a word and $\simple(\mathbf w)=\{t_1,t_2,\dots, t_m\}$. We may assume without loss of generality that $\mathbf w(t_1,t_2,\dots, t_m)=t_1t_2\cdots t_m$. Then $\mathbf w = t_0\mathbf w_0 t_1 \mathbf w_1 \cdots t_m \mathbf w_m$ where $\mathbf w_0,\mathbf w_1,\dots,\mathbf w_m$ are possibly empty words and $t_0$ is the empty word. The words $\mathbf w_0$, $\mathbf w_1$, \dots, $\mathbf w_m$ are called \emph{blocks} of a word $\bf w$, while $t_0,t_1,\dots,t_m$ are said to be \emph{dividers} of $\mathbf w$. The representation of the word \textbf w as a product of alternating dividers and blocks starting with the divider $t_0$ and ending with the block $\mathbf w_m$ is called a \emph{decomposition} of the word \textbf w. For a given word \textbf w, a letter $x\in\con(\mathbf w)$ and a natural number $i\le\occ_x(\mathbf w)$, we denote by $h_i(\mathbf w,x)$ the right-most divider of \textbf w that precedes the $i$th occurrence of $x$ in $\mathbf w$, and by $t(\mathbf w,x)$ the right-most divider of \textbf w that precedes the latest occurrence of $x$ in \textbf w.

\begin{lemma}[\mdseries{\cite[Corollary 2.5]{Gusev-19+}}]
\label{word problem F vee dual to E}
A non-trivial identity $\mathbf u\approx \mathbf v$ holds in the variety $\mathbf F\vee\mathbf E$ if and only if the claims
\begin{align}
\label{sim(u)=sim(v) & mul(u)=mul(v)}
&\simple(\mathbf u)=\simple(\mathbf v)\text{ and }\mul(\mathbf u)=\mul(\mathbf v),\\
\label{h_1(u,x)=h_1(v,x)}
&h_1(\mathbf u,x)= h_1(\mathbf v,x)\text{ for all }x\in \con(\mathbf u),\\
\label{h_2(u,x)=h_2(v,x)}
&h_2(\mathbf u,x)= h_2(\mathbf v,x)\text{ for all }x\in \con(\mathbf u),\\
\label{t(u,x)=t(v,x)}
&t(\mathbf u,x)= t(\mathbf v,x)\text{ for all }x\in \con(\mathbf u)
\end{align}
are true.\qed
\end{lemma}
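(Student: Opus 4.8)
The statement is a word problem for the join $\mathbf F\vee\mathbf E$, so the plan is to reduce it to the word problems of the two factors. The starting point is the standard fact that a variety is determined by its identities and that $\mathrm{Id}(\mathbf F\vee\mathbf E)=\mathrm{Id}(\mathbf F)\cap\mathrm{Id}(\mathbf E)$; hence a non-trivial identity $\mathbf u\approx\mathbf v$ holds in $\mathbf F\vee\mathbf E$ precisely when it holds simultaneously in $\mathbf F$ and in $\mathbf E$. I would therefore first record (or quote from the companion paper) a word problem for each factor separately and then show that the conjunction of the two characterizations is exactly the conjunction of the four displayed conditions.

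For the necessity direction I would argue that each of the four conditions is forced by one of the two factors. The condition on $\simple$ and $\mul$ is forced already because every identity of $\mathbf F\vee\mathbf E$ preserves the content and the simple/multiple status of each letter: the defining identities never merge a simple letter with another occurrence, and \eqref{xx=xxx} keeps a multiple letter multiple. The conditions on $h_1$ and $h_2$ I would extract from $\mathbf F$: its generating identities \eqref{xyx=xyxx} and $x^2y\approx x^2yx$ only rearrange or append occurrences of a letter after its second one, so the dividers preceding the first and second occurrences are invariants of $\mathbf F$, and a failure of either can be detected by an explicit substitution into a suitable monoid of $\mathbf F$. Dually, the condition on $t$ I would extract from $\mathbf E$: a direct check on the generators of $\mathbf E$, in particular on $yx^2\approx xyx$ and \eqref{xxyy=yyxx}, shows that the divider preceding the last occurrence of each letter is preserved, while a violation is separated by a substitution into a monoid of $\mathbf E$.

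For the sufficiency direction I would assume all four conditions and deduce that $\mathbf u\approx\mathbf v$ holds in each factor. For $\mathbf F$ this is immediate once its word problem is identified with the content/simple condition together with the $h_1$- and $h_2$-conditions. The delicate part is $\mathbf E$, whose own word problem involves last-occurrence information, and I expect its natural characterization to pin down not only $t$ but also a second, second-to-last-occurrence divider. The main obstacle is therefore to verify that, in the presence of the content/simple condition and the two first-occurrence conditions inherited from $\mathbf F$, any such additional $\mathbf E$-invariant is already determined, so that the conjunction of the two factor characterizations collapses to exactly the four listed conditions. I would settle this by a normal-form computation: using identities valid throughout $\mathbf F\vee\mathbf E$ one rewrites an arbitrary word to a canonical form in which the dividers around the first two and the last occurrence of every letter are prescribed by the data $(\simple,\mul,h_1,h_2,t)$; two words satisfying the four conditions then share the same canonical form and are consequently equal in $\mathbf F\vee\mathbf E$. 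The combinatorial bookkeeping in this rewriting, namely showing that the moves available in the join really suffice to erase every invariant other than the retained ones, is where essentially all of the work lies.
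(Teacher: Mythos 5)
There is no proof in the paper to compare against: Lemma~\ref{word problem F vee dual to E} is stated without proof, being imported verbatim from \cite[Corollary~2.5]{Gusev-19+}. Your route---an identity holds in $\mathbf F\vee\mathbf E$ if and only if it holds in both $\mathbf F$ and $\mathbf E$, so one combines solutions of the word problems of the two factors---is precisely how the cited source obtains the result (it appears there as a corollary of exactly such factor characterizations), so you have reconstructed the standard argument rather than found a different one. One substantive correction, though: the ``delicate part'' you anticipate on the $\mathbf E$ side is illusory. An identity holds in $\mathbf E$ if and only if \eqref{sim(u)=sim(v) & mul(u)=mul(v)} and \eqref{t(u,x)=t(v,x)} hold; indeed, the substitution instance $x\mathbf w x\approx\mathbf w x^2$ of the defining identity $yx^2\approx xyx$ merges every non-final occurrence of a multiple letter into its last occurrence, after which $x^2\approx x^3$ and \eqref{xxyy=yyxx} reduce any word to a canonical form determined exactly by its simple letters (in order), its multiple letters, and the dividers $t(\,\cdot\,,x)$---no second-to-last-occurrence invariant survives. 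The identities of $\mathbf F$ are in turn characterized by \eqref{sim(u)=sim(v) & mul(u)=mul(v)}, \eqref{h_1(u,x)=h_1(v,x)} and \eqref{h_2(u,x)=h_2(v,x)}. With these two exact characterizations in hand, both directions of the lemma are immediate from the fact that the identity theory of a join is the intersection of the identity theories of the factors; no further ``collapse'' or normal-form bookkeeping inside the join is needed. What your proposal genuinely leaves open is the proof of those two factor characterizations, which you propose to quote; since the paper itself cites the entire statement, that is a comparable level of rigour, but be aware that this is where all the real work lies (it is carried out in \cite{Gusev-19+}, with related material in \cite{Gusev-Vernikov-18}, and note the notational swap of $\mathbf E$ and $\overleftarrow{\mathbf E}$ between this paper and those sources, as remarked in Section~\ref{preliminaries}).
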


The following claim is evident.

\begin{lemma}
\label{decompositions of u and v are equivalent}
Let $\mathbf u\approx \mathbf v$ be an identity that satisfies the claims~\eqref{sim(u)=sim(v) & mul(u)=mul(v)} and~\eqref{h_1(u,x)=h_1(v,x)}. Suppose that 
\begin{equation}
\label{decomposition of u}
t_0\mathbf u_0 t_1 \mathbf u_1 \cdots t_m \mathbf u_m
\end{equation}
is the decomposition of $\mathbf u$. Then the decomposition of $\mathbf v$ has the form
\begin{equation}
\label{decomposition of v}
t_0\mathbf v_0 t_1 \mathbf v_1 \cdots t_m \mathbf v_m
\end{equation}
for some words $\mathbf v_0,\mathbf v_1,\dots,\mathbf v_m$.\qed
\end{lemma}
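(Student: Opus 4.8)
The plan is to show that the hypotheses force the simple letters of $\mathbf u$ and $\mathbf v$ — which are exactly the nonempty dividers — to occur in the same relative order, so that the two decompositions share the common divider sequence $t_0,t_1,\dots,t_m$. First I would record the consequences of \eqref{sim(u)=sim(v) & mul(u)=mul(v)}: since $\simple(\mathbf u)=\simple(\mathbf v)$, the words $\mathbf u$ and $\mathbf v$ have the same set of simple letters, and in particular the decomposition of $\mathbf v$ also has exactly $m$ nonempty dividers, drawn from the same pool $\{t_1,\dots,t_m\}$ as those of $\mathbf u$. Writing the decomposition of $\mathbf v$ as $s_0\mathbf v_0 s_1\mathbf v_1\cdots s_m\mathbf v_m$ with $s_0$ the empty word, it therefore suffices to prove $s_i=t_i$ for every $i$, which is precisely the assertion that the dividers agree and hence that \eqref{decomposition of v} has the claimed shape.

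The key observation is that for a simple letter $x$ the value $h_1(\mathbf w,x)$ is nothing but the divider immediately to the left of $x$ in the decomposition of $\mathbf w$, because $x$ is itself a divider and its first occurrence is its only one; concretely, in $t_0\mathbf w_0t_1\mathbf w_1\cdots t_m\mathbf w_m$ one has $h_1(\mathbf w,t_i)=t_{i-1}$. I would then argue by induction on $i$, with the trivial base case $s_0=t_0=$ the empty word. For the inductive step, assume $s_j=t_j$ for all $j<i$ and write $s_i=t_k$ (possible since $\con(\mathbf u)=\con(\mathbf v)$ by \eqref{sim(u)=sim(v) & mul(u)=mul(v)}, so $s_i$ is a simple letter common to both words). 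In $\mathbf v$ the left-neighbour divider of $s_i$ is $s_{i-1}$, so $h_1(\mathbf v,s_i)=s_{i-1}=t_{i-1}$ by the induction hypothesis, while in $\mathbf u$ the left-neighbour of $t_k$ is $t_{k-1}$, so $h_1(\mathbf u,s_i)=t_{k-1}$. Applying \eqref{h_1(u,x)=h_1(v,x)} to $x=s_i$ gives $t_{k-1}=t_{i-1}$, and since the dividers are pairwise distinct letters this forces $k=i$, that is, $s_i=t_i$.

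There is no genuine obstacle here — the statement is indeed evident once one notices that $h_1$ of a simple letter recovers its predecessor divider. The only point requiring care is the bookkeeping that identifies ``the $i$th divider of $\mathbf v$'' with $t_i$, which the induction handles cleanly. Having established $s_i=t_i$ for all $i$, I would conclude that the decomposition of $\mathbf v$ is $t_0\mathbf v_0t_1\mathbf v_1\cdots t_m\mathbf v_m$, exactly the form required in \eqref{decomposition of v}.
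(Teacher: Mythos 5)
Your proof is correct. The paper offers no argument at all for this lemma --- it is stated as ``evident'' with an immediate \textup{\qedsymbol} --- and your induction along the dividers, resting on the observation that $h_1(\mathbf w,t_i)=t_{i-1}$ when $t_i$ is a simple letter, is precisely the routine verification the paper leaves to the reader.
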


If $\rho$ is an equivalence relation on the free monoid $F^1$ then we say that a word $\mathbf w$ is a $\rho$-\emph{term} for a variety $\mathbf V$ if $\mathbf w\,\rho\,\mathbf w'$ whenever $\mathbf V$ satisfies $\mathbf w\approx\mathbf w'$. The following construction from~\cite{Sapir-18} is a generalisation of the construction $S(W)$. Let $\rho$ be a congruence on the free monoid $F^1$ and $W$ be a set of words in $F^1$ such that the empty word forms a singleton $\rho$-class, $W$ is a union of $\rho$-classes and $W$ is closed under taking subwords. Since $W$ is a union of $\rho$-classes, the ideal $I(W) = F^1\setminus W$ is also a union of $\rho$-classes if it is not empty. Let $\varphi_\rho$ denote the homomorphism corresponding to $\rho$. Since $W$ is closed under taking subwords, $\varphi_\rho(I(W))$ is an ideal of the quotient monoid $F^1/\rho$. We define $S_\rho(W)$ as the Rees quotient of $F^1/\rho$ over $\varphi_\rho(I(W))$.

The following lemma gives us a connection between monoids of the form $S_\rho(W)$ and $\rho$-terms for monoid varieties.

\begin{lemma}[\mdseries{\cite[Lemma~7.1]{Sapir-18}}]
\label{S_tau in V}
Let $\rho$ be a congruence on the free monoid $F^1$ and $W$ be a set of words in $F^1$ such that the empty word forms a singleton $\rho$-class, $W$ is a union of $\rho$-classes and $W$ is closed under taking subwords. A monoid variety $\mathbf V$ contains $S_\rho(W)$ if and only if every word in $W$ is a $\rho$-term for $\mathbf V$.
\end{lemma}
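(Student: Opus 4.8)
The plan is to prove both implications by relating substitutions into $S_\rho(W)$ with substitutions into the free monoid $F^1$ through the canonical surjection. Let $\psi\colon F^1\to S_\rho(W)$ denote the composition of $\varphi_\rho$ with the Rees quotient map $F^1/\rho\to S_\rho(W)$. Since $W$ and $I(W)$ are disjoint unions of $\rho$-classes, one has $\psi(\mathbf w)=\varphi_\rho(\mathbf w)$, a non-zero element of $S_\rho(W)$, whenever $\mathbf w\in W$, and $\psi(\mathbf w)=0$ whenever $\mathbf w\in I(W)$. This map is the single tool needed in both directions.

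For the necessity, I would assume $S_\rho(W)\in\mathbf V$, fix $\mathbf w\in W$, and suppose $\mathbf V$ satisfies $\mathbf w\approx\mathbf w'$. Then $S_\rho(W)$ satisfies this identity, and evaluating it under the substitution $\psi$ gives $\psi(\mathbf w)=\psi(\mathbf w')$. As $\mathbf w\in W$, the left-hand side is non-zero, so $\mathbf w'\in W$ and $\varphi_\rho(\mathbf w)=\varphi_\rho(\mathbf w')$, that is, $\mathbf w\,\rho\,\mathbf w'$. Hence every word in $W$ is a $\rho$-term for $\mathbf V$.

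For the sufficiency, I would assume every word in $W$ is a $\rho$-term for $\mathbf V$ and show that $S_\rho(W)$ satisfies every identity $\mathbf u\approx\mathbf v$ holding in $\mathbf V$. Given an arbitrary substitution $\theta\colon F^1\to S_\rho(W)$, the key step is to lift it to a substitution $\sigma\colon F^1\to F^1$ by choosing, for each letter $x$, a word $\sigma(x)\in W$ with $\varphi_\rho(\sigma(x))=\theta(x)$ when $\theta(x)\neq 0$, and a word $\sigma(x)\in I(W)$ when $\theta(x)=0$. By construction $\psi\circ\sigma$ and $\theta$ agree on every letter, and since $F^1$ is free these two monoid homomorphisms coincide, so $\theta=\psi\circ\sigma$. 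As identities are stable under substitution, $\mathbf V$ also satisfies $\sigma(\mathbf u)\approx\sigma(\mathbf v)$. I would then split into cases on whether $\sigma(\mathbf u)$ lies in $W$ or in $I(W)$: if $\sigma(\mathbf u)\in W$, the $\rho$-term property yields $\sigma(\mathbf u)\,\rho\,\sigma(\mathbf v)$, whence $\sigma(\mathbf v)\in W$ because $W$ is a union of $\rho$-classes, and $\theta(\mathbf u)=\psi(\sigma(\mathbf u))=\psi(\sigma(\mathbf v))=\theta(\mathbf v)$; the symmetric argument, with the roles of $\mathbf u$ and $\mathbf v$ interchanged, shows that $\sigma(\mathbf u)\in I(W)$ forces $\sigma(\mathbf v)\in I(W)$, so that $\theta(\mathbf u)=0=\theta(\mathbf v)$. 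In either case $\theta(\mathbf u)=\theta(\mathbf v)$, and as $\theta$ was arbitrary, $S_\rho(W)$ satisfies $\mathbf u\approx\mathbf v$.

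The genuinely routine points are the verification that $\theta=\psi\circ\sigma$ and the existence of the lift $\sigma$ (the value $0$ is assigned only when $I(W)\neq\emptyset$, so a suitable preimage always exists). The main obstacle, and the only delicate step, is the case analysis in the sufficiency direction: one must guarantee that the zero of the Rees quotient is respected, i.e., that $\sigma(\mathbf u)$ and $\sigma(\mathbf v)$ land in $W$ together or in $I(W)$ together. This is precisely the point where both hypotheses on $W$ are used at once—that it is a union of $\rho$-classes and that each of its words is a $\rho$-term—exploiting the symmetry between $\mathbf u$ and $\mathbf v$.
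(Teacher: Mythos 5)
Your proof is correct and complete: the necessity direction via the canonical substitution $\psi$, and the sufficiency direction via lifting an arbitrary substitution $\theta$ to an endomorphism $\sigma$ of $F^1$ with $\theta=\psi\circ\sigma$ (including the symmetric case analysis that keeps $\sigma(\mathbf u)$ and $\sigma(\mathbf v)$ on the same side of the partition $W\cup I(W)$), are exactly the right ingredients. The paper itself offers no proof of this lemma --- it is quoted from \cite[Lemma~7.1]{Sapir-18} --- and your argument is essentially the standard one given in that source, so there is nothing to add.
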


\section{The variety O and its subvarieties}
\label{var O}

The goal of this section is to prove the following

\begin{proposition}
\label{O is HFB}
The variety $\mathbf O$ is hereditarily finitely based.
\end{proposition}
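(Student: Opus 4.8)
The plan is to prove the stronger statement that $\mathbf O=\mathbf F\vee\mathbf E$, from which hereditary finite basing follows by a short case analysis. First I note that $\mathbf O$ satisfies both identities that make Lemma~\ref{E and F nsubseteq X} applicable: \eqref{xxyy=yyxx} is among its defining identities, and substituting $y\mapsto1$ and $t\mapsto1$ into \eqref{xzxyxty=xzyxty} yields $xzx^2\approx xzx$, which is \eqref{xyx=xyxx} up to renaming of letters (in particular $\mathbf O$ satisfies \eqref{xx=xxx} and so is aperiodic). Granting for the moment that $\mathbf O=\mathbf F\vee\mathbf E$, let $\mathbf V$ be any subvariety of $\mathbf O$; then $\mathbf V$ also satisfies \eqref{xyx=xyxx} and \eqref{xxyy=yyxx}. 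If $\mathbf E\nsubseteq\mathbf V$ then $\mathbf V\subseteq\mathbf K$ by Lemma~\ref{E and F nsubseteq X}(i); if $\mathbf F\nsubseteq\mathbf V$ then $\mathbf V\subseteq\mathbf Q$ by Lemma~\ref{E and F nsubseteq X}(ii); in either case $\mathbf V$ is finitely based because $\mathbf K$ and $\mathbf Q$ are hereditarily finitely based by Lemma~\ref{K and Q}. In the remaining case $\mathbf E\subseteq\mathbf V$ and $\mathbf F\subseteq\mathbf V$, so $\mathbf O=\mathbf F\vee\mathbf E\subseteq\mathbf V\subseteq\mathbf O$, forcing $\mathbf V=\mathbf O$, which is finitely based by definition. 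Hence every subvariety of $\mathbf O$ is finitely based.

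It remains to establish $\mathbf O=\mathbf F\vee\mathbf E$, and both inclusions are proved through the word problem of Lemma~\ref{word problem F vee dual to E}. For $\mathbf F\vee\mathbf E\subseteq\mathbf O$ I would simply verify that the two defining identities of $\mathbf O$ satisfy conditions \eqref{sim(u)=sim(v) & mul(u)=mul(v)}--\eqref{t(u,x)=t(v,x)}. For \eqref{xxyy=yyxx} this is immediate since neither side has simple letters, so the only divider is the empty one and all of $h_1$, $h_2$, $t$ coincide trivially. For \eqref{xzxyxty=xzyxty} the simple letters are $z$ and $t$ in both words, giving the common decomposition $x\mid z\mid xyx\mid t\mid y$ on the left and $x\mid z\mid yx\mid t\mid y$ on the right; a direct check shows that for each of the multiple letters $x$ and $y$ the blocks carrying the first, second, and last occurrences agree on the two sides. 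Thus both identities hold in $\mathbf F\vee\mathbf E$, which gives the inclusion.

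The harder inclusion $\mathbf O\subseteq\mathbf F\vee\mathbf E$ amounts to showing that whenever an identity $\mathbf u\approx\mathbf v$ satisfies \eqref{sim(u)=sim(v) & mul(u)=mul(v)}--\eqref{t(u,x)=t(v,x)} it is already derivable from the identities of $\mathbf O$. The approach is a normal-form argument. Using Lemma~\ref{decompositions of u and v are equivalent} I would align the decompositions of $\mathbf u$ and $\mathbf v$ so that they share the dividers $t_0,t_1,\dots,t_m$, and then reduce each word, block by block, to a canonical representative determined solely by the data recorded in \eqref{sim(u)=sim(v) & mul(u)=mul(v)}--\eqref{t(u,x)=t(v,x)}. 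Concretely, every occurrence of a multiple letter other than its first, second, and last occurrence is ``interior'' and invisible to the invariants $h_1$, $h_2$, $t$, so it should be removed using \eqref{xzxyxty=xzyxty}; after this each multiple letter occurs at most three times, sitting in the prescribed blocks $h_1$, $h_2$, $t$. The arrangement of letters inside each block is then straightened into a fixed order with the help of \eqref{xxyy=yyxx}, \eqref{xyx=xyxx} and \eqref{xx=xxx}. Since the resulting canonical word depends only on the common invariants of $\mathbf u$ and $\mathbf v$, the two reductions terminate at the same word and $\mathbf O$ derives $\mathbf u\approx\mathbf v$.

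The main obstacle is the deletion step in this last argument: to invoke \eqref{xzxyxty=xzyxty} one must exhibit the required pattern of surrounding dividers and flanking occurrences, which is not literally present for an arbitrary interior occurrence. I expect to handle this by first deriving from the identities of $\mathbf O$ a few more flexible deletion and commutation rules, and then running the reduction by induction (say, on the total number of surplus occurrences), taking care that deleting an interior occurrence of one letter disturbs none of the tracked occurrences of the others. The block-normalization step should be comparatively routine once \eqref{xx=xxx} and the commutativity of squares \eqref{xxyy=yyxx} are available.
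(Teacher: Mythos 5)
Your first inclusion, $\mathbf F\vee\mathbf E\subseteq\mathbf O$, is correct, and your verification of it is essentially right. But the central claim $\mathbf O=\mathbf F\vee\mathbf E$ is false, so the proposal collapses at the step you yourself flagged as the ``main obstacle''. A concrete witness is the identity \eqref{xyxztx=xyxzxtx}, that is, $xyxztx\approx xyxzxtx$. It holds in $\mathbf F\vee\mathbf E$: by Lemma~\ref{word problem F vee dual to E} one only checks \eqref{sim(u)=sim(v) & mul(u)=mul(v)}--\eqref{t(u,x)=t(v,x)}, and in both words the dividers are $y,z,t$, the first occurrence of $x$ precedes $y$, the second lies in the block following $y$, and the last lies after $t$. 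However, it fails in $\mathbf O$. To see this, assign to every word $\mathbf w$ its sequence of dividers together with, for each letter $a\in\con(\mathbf w)$, the set of blocks of $\mathbf w$ that contain an occurrence of $a$. Both defining identities of $\mathbf O$ preserve this data. Indeed, in a non-trivial instance $\mathbf a^2\mathbf b^2\approx\mathbf b^2\mathbf a^2$ of \eqref{xxyy=yyxx}, every letter of $\mathbf a\mathbf b$ is multiple in the ambient word, so the whole factor contains no divider, lies inside a single block, and is merely rearranged there. In a non-trivial instance $\mathbf a\mathbf b\mathbf a\mathbf c\mathbf a\mathbf d\mathbf c\approx\mathbf a\mathbf b\mathbf c\mathbf a\mathbf d\mathbf c$ of \eqref{xzxyxty=xzyxty}, all letters of $\mathbf a$ and $\mathbf c$ are multiple in the ambient word, so the deleted (or inserted) middle copy of $\mathbf a$ is separated from the following copy of $\mathbf a$ only by the divider-free word $\mathbf c$; hence these two copies lie in the same block, and no letter gains or loses a block. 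Consequently every identity derivable from the basis of $\mathbf O$ preserves this data. But in $xyxztx$ the letter $x$ occupies only the blocks before $y$, between $y$ and $z$, and after $t$, whereas in $xyxzxtx$ it also occupies the block between $z$ and $t$. So \eqref{xyxztx=xyxzxtx} satisfies your four conditions yet is not derivable from \eqref{xxyy=yyxx} and \eqref{xzxyxty=xzyxty}; thus $\mathbf F\vee\mathbf E$ is a \emph{proper} subvariety of $\mathbf O$. The paper's own architecture signals the same thing: Lemma~\ref{reduction to balanced identities} explicitly carries \eqref{xyxztx=xyxzxtx} as an identity that may have to be adjoined to $\mathbf O$, which would be pointless if it already held there, and the list $\Phi$ in Lemma~\ref{basis for subvarieties of O} consists of identities of $\mathbf F\vee\mathbf E$ that cut out proper subvarieties of $\mathbf O$.

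Because of this, your final case yields only $\mathbf F\vee\mathbf E\subseteq\mathbf V\subseteq\mathbf O$, and this interval is exactly where all the difficulty of Proposition~\ref{O is HFB} is concentrated: it contains varieties strictly between the endpoints, for instance $\mathbf O\wedge\var\{\eqref{xyxztx=xyxzxtx}\}$, and a priori it could have contained a non-finitely based one. The ``more flexible deletion rules'' you hope to derive cannot exist, because the block-membership invariant above is an outright obstruction to deleting an occurrence of $x$ from a block in which $x$ does not reappear; no normal form of the kind you describe is available in $\mathbf O$. This is precisely why the paper does not prove $\mathbf O=\mathbf F\vee\mathbf E$, but instead proves Lemma~\ref{reduction to balanced identities} (every identity of $\mathbf F\vee\mathbf E$ is, modulo $\mathbf O$, equivalent to a well-balanced one, possibly together with \eqref{xyxztx=xyxzxtx}) and Lemma~\ref{basis for subvarieties of O} (modulo $\mathbf O$, any well-balanced identity reduces to a subset of the explicit family $\Phi$, whose infinite series $\alpha_n,\beta_n,\gamma_n,\gamma_n'$ are each eventually redundant), and only then concludes that every variety in the interval is finitely based.
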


We need the several auxiliary results. The following statement is evident. We will use it below without references.

\begin{lemma}
\label{identities in O}
The identities~\eqref{xyx=xyxx} and
\begin{equation}
\label{xtyzxy=xtyzyx}
xtyzxy\approx xtyzyx
\end{equation}
hold in the variety $\mathbf O$.\qed
\end{lemma}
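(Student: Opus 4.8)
The plan is to derive both identities from the defining identities \eqref{xxyy=yyxx} and \eqref{xzxyxty=xzyxty} of $\mathbf O$ by pure substitution, using the standard facts that a letter of an identity may be replaced by an arbitrary word and that an identity may be applied to any subword.

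First I would extract a single convenient tool. Substituting $y\mapsto 1$ into \eqref{xzxyxty=xzyxty} collapses it to $xzx^2t\approx xzxt$; replacing $z,t$ by arbitrary words and applying the result inside an arbitrary word gives the rule
$$
(\star)\qquad \mathbf p\,x\,\mathbf u\approx\mathbf p\,x^2\,\mathbf u\quad\text{whenever }x\in\con(\mathbf p),
$$
valid for all words $\mathbf p,\mathbf u$. In words, a consecutive copy of a letter that has already occurred may be inserted or deleted at will. The identity \eqref{xyx=xyxx} is just the instance of $(\star)$ with $\mathbf p=xy$ and $\mathbf u$ empty (equivalently, substitute $y\mapsto1$ and $t\mapsto1$ into \eqref{xzxyxty=xzyxty} to get $xzx^2\approx xzx$ and rename $z$ as $y$), so this case needs no further work.

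The substance is \eqref{xtyzxy=xtyzyx}, and the idea is to inflate, commute squares, and deflate. Since $x$ and $y$ both occur in the prefix $xtyz$ of $xtyzxy$, I can use $(\star)$ to turn the tail $xy$ into $x^2y^2$, then swap the two squares by \eqref{xxyy=yyxx} applied after the prefix $xtyz$, then use $(\star)$ again to contract $y^2x^2$ to $yx$:
$$
xtyzxy\approx xtyzx^2y\approx xtyzx^2y^2\approx xtyzy^2x^2\approx xtyzy^2x\approx xtyzyx,
$$
the outer four steps being instances of $(\star)$ and the central step being \eqref{xxyy=yyxx}.

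The only thing requiring care is the bookkeeping in each application of $(\star)$: one must check that the letter being doubled or contracted genuinely has an earlier occurrence (the leading $x$ for the $x$-steps, the first $y$ for the $y$-steps) and split the word correctly into the prefix $\mathbf p$ and tail $\mathbf u$. There is no real obstacle beyond this, which is exactly why the statement is recorded as evident; the entire argument reduces to a handful of substitutions into \eqref{xzxyxty=xzyxty} together with one use of \eqref{xxyy=yyxx}.
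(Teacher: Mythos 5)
Your derivation is correct: substituting the empty word for $y$ (and, where needed, $t$) in \eqref{xzxyxty=xzyxty} gives $xzx^2t\approx xzxt$, hence your insertion/deletion rule $(\star)$ and the identity \eqref{xyx=xyxx}, and the five-step chain $xtyzxy\approx xtyzx^2y\approx xtyzx^2y^2\approx xtyzy^2x^2\approx xtyzy^2x\approx xtyzyx$ is a valid deduction of \eqref{xtyzxy=xtyzyx}, each inflation/deflation step having the required earlier occurrence of the doubled letter in the prefix. The paper offers no proof at all --- the lemma is declared evident and stated with a \qed\ --- so your argument is exactly the routine verification that was left to the reader, carried out in the intended way (inflate by $(\star)$, commute the squares by \eqref{xxyy=yyxx}, deflate by $(\star)$).
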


\begin{lemma}
\label{v_1av_2av_3=v_1v_2av_3}
Let $\mathbf w=\mathbf v_1a\mathbf v_2a\mathbf v_3$ where $\mathbf v_1$, $\mathbf v_2$ and $\mathbf v_3$ are possibly empty words. Suppose that $a\in\con(\mathbf v_1)$ and $\con(\mathbf v_2)\subseteq \mul(\mathbf w)$. Then $\mathbf O$ satisfies the identity $\mathbf w\approx\mathbf v_1\mathbf v_2a\mathbf v_3$.
\end{lemma}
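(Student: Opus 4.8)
The plan is to argue by induction on the length of $\mathbf v_2$, using two elementary manipulations available in $\mathbf O$. By Lemma~\ref{identities in O} both~\eqref{xyx=xyxx} and~\eqref{xtyzxy=xtyzyx} hold in $\mathbf O$, and from~\eqref{xtyzxy=xtyzyx} I would first extract a \emph{transposition rule}: if $p$ and $q$ are letters both occurring in a word $\mathbf a$, then $\mathbf O$ satisfies $\mathbf a\,pq\,\mathbf b\approx\mathbf a\,qp\,\mathbf b$ for every word $\mathbf b$. Indeed, fixing occurrences of $p$ and $q$ inside $\mathbf a$ and reading~\eqref{xtyzxy=xtyzyx}, with a suitable choice of which of the two repeated letters is $x$ and which is $y$ and possibly from right to left, exhibits the exchange of the adjacent pair as a substitution instance. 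Second, reading~\eqref{xyx=xyxx} backwards gives the \emph{merging rule} $\mathbf v_1 aa\mathbf v_3\approx\mathbf v_1 a\mathbf v_3$ whenever $a\in\con(\mathbf v_1)$ (write $\mathbf v_1=\mathbf c\,a\,\mathbf d$ with $a\notin\con(\mathbf d)$ and substitute $x\mapsto a$, $y\mapsto\mathbf d$); this settles the base case $\mathbf v_2$ empty immediately, since there the claim is exactly $\mathbf v_1 aa\mathbf v_3\approx\mathbf v_1 a\mathbf v_3$.

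For the inductive step I would write $\mathbf v_2=b\mathbf v_2'$ and look at the first letter $b$. If $b$ occurs in $\mathbf v_1$, then both $a$ and $b$ occur in $\mathbf v_1$, so the transposition rule yields $\mathbf v_1\,a\,b\,\mathbf v_2'\,a\,\mathbf v_3\approx\mathbf v_1\,b\,a\,\mathbf v_2'\,a\,\mathbf v_3$; the two displayed occurrences of $a$ now enclose the shorter block $\mathbf v_2'$, the letter $a$ still lies in the new prefix $\mathbf v_1 b$, and the content condition is inherited, so the induction hypothesis applies and gives $\mathbf v_1\,b\,\mathbf v_2'\,a\,\mathbf v_3=\mathbf v_1\,\mathbf v_2\,a\,\mathbf v_3$. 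The remaining possibility is that the letters one wishes to move across the first displayed $a$ have no earlier occurrence; being multiple, their repetitions then lie to the right of the second displayed $a$. This is precisely the configuration governed by the defining identity~\eqref{xzxyxty=xzyxty}: letting the three occurrences of $a$ play the role of the three $x$'s and the factor between the two displayed $a$'s play the role of $y$, the identity deletes the first displayed $a$ and shifts that factor leftwards, which is exactly the assertion to be proved. To invoke~\eqref{xzxyxty=xzyxty} one must first arrange that a copy of this factor reappears, up to an arbitrary intervening word, just after the second $a$, and here~\eqref{xxyy=yyxx} together with the transposition rule are used to move the relevant copies inside $\mathbf v_3$ into position.

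The hard part is precisely this last reshuffling. The identity~\eqref{xzxyxty=xzyxty} is rigid: it requires the \emph{entire} factor lying between the two displayed $a$'s to recur as a contiguous factor to the right of the second $a$, whereas a priori the duplicates of the letters of $\mathbf v_2$ are scattered through $\mathbf v_1$ and $\mathbf v_3$ in an uncontrolled order. I therefore expect the bulk of the argument to be a careful secondary induction that peels the letters of $\mathbf v_2$ one at a time, at each stage either commuting a letter past the first $a$ by the transposition rule when a left copy exists, or, when the only further copy lies to the right, commuting that right copy into place by~\eqref{xxyy=yyxx} and~\eqref{xtyzxy=xtyzyx} so that a single application of~\eqref{xzxyxty=xzyxty} erases the letter. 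The essential but routine bookkeeping throughout is to check that each such move preserves the multiplicities of letters, so that the hypothesis $\con(\mathbf v_2)\subseteq\mul(\mathbf w)$ remains in force and the transposition and merging rules stay applicable.
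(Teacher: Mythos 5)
Your induction scheme, base case, and the case where the peeled letter already occurs in $\mathbf v_1$ are all correct and in the same spirit as the paper's proof (which also inducts on the length of $\mathbf v_2$ and splits on whether the peeled letter has an earlier occurrence). The genuine gap is in your remaining case, and it comes from peeling $\mathbf v_2$ from the \emph{left} instead of from the \emph{right}. First, your dichotomy is false: if the first letter $b$ of $\mathbf v_2$ does not occur in $\mathbf v_1$, its other occurrences need not lie to the right of the second displayed $a$ --- they can lie inside $\mathbf v_2$ itself. Concretely, take $\mathbf w=a\cdot a\cdot bb\cdot a$ (so $\mathbf v_1=a$, $\mathbf v_2=bb$, $\mathbf v_3$ empty): all hypotheses of the lemma hold, yet the first $b$ has no earlier occurrence and no occurrence after the second displayed $a$, so neither of your two rules applies to it.

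Second, even when $b$ does recur in $\mathbf v_3$, you misdiagnose where the rigidity of~\eqref{xzxyxty=xzyxty} lies. A substitution instance deleting an occurrence of $a$ reads $\mathbf P\,a\,\mathbf Z\,a\,\mathbf Y\,a\,\mathbf T\,\mathbf Y\,\mathbf Q\approx \mathbf P\,a\,\mathbf Z\,\mathbf Y\,a\,\mathbf T\,\mathbf Y\,\mathbf Q$: the deleted $a$ must be followed by $\mathbf Y$ and then \emph{immediately} by another occurrence of $a$, whereas arbitrary intervening material $\mathbf T$ is already permitted between that $a$ and the recurrence of $\mathbf Y$. So no ``commuting of the right copy into place'' is needed on the right; what breaks is the left side: with $\mathbf Y=b$, the block $\mathbf v_2'$ sits between $b$ and the second displayed $a$, and it cannot be moved out of the way. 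Indeed, your transposition rule requires \emph{both} letters of the swapped pair to occur earlier, and in this case $b$ has no earlier occurrence, so every adjacent pair involving $b$ is frozen; \eqref{xxyy=yyxx} is of no help either, since it needs squares, and letters of $\mathbf v_3$ may even be simple. The repair is exactly the paper's choice: peel the \emph{last} letter $x_n$ of $\mathbf v_2$. Then ``occurs earlier'' means occurs in $\mathbf v_1x_1\cdots x_{n-1}$, which makes the dichotomy exhaustive (either $x_n$ occurs there, or $x_n\in\con(\mathbf v_3)$ since $x_n\in\mul(\mathbf w)$), and in the second case $x_n$ is already adjacent to the second displayed $a$, so~\eqref{xzxyxty=xzyxty} applies verbatim with $y\mapsto x_n$ and $t\mapsto{}$the prefix of $\mathbf v_3$ preceding the recurrence of $x_n$, inserting or deleting the required $a$ with no reshuffling at all. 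With that single change of direction your argument goes through and coincides with the paper's proof.
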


\begin{proof}
Let $\mathbf v_2=x_1x_2\cdots x_n$ where the letters $x_1,x_2,\dots, x_n$ are not necessarily  different. We will use induction by $n$.

\smallskip

\emph{Induction base}. Suppose that $n=0$. Here the identity
$$
\mathbf w= \mathbf v_1a^2\mathbf v_3\stackrel{\eqref{xyx=xyxx}}\approx \mathbf v_1a\mathbf v_3
$$
holds in $\mathbf O$ and we are done.

\smallskip

\emph{Induction step}. Let now $n>0$. If $x_n\in\con(\mathbf v_1x_1x_2\cdots x_{n-1})$ then $\mathbf O$ satisfies the identities
\begin{align*}
\mathbf v_1\mathbf v_2a\mathbf v_3&{}=\mathbf v_1x_1x_2\cdots x_n a\mathbf v_3\\
&{}\approx\mathbf v_1x_1x_2\cdots x_{n-1}ax_n\mathbf v_3&&\textup{by~\eqref{xtyzxy=xtyzyx}}\\
&{}\approx\mathbf v_1ax_1x_2\cdots x_{n-1}ax_n \mathbf v_3&&\textup{by the induction assumption}\\
&{}\approx\mathbf v_1ax_1x_2\cdots x_na \mathbf v_3&&\textup{by~\eqref{xtyzxy=xtyzyx}}\\
&=\mathbf w.
\end{align*}
If $x_n\notin\con(\mathbf v_1x_1x_2\cdots x_{n-1})$ then $x_n\in\con(\mathbf v_3)$ because $x_n\in\mul(\mathbf w)$. Then the identities
\begin{align*}
\mathbf v_1\mathbf v_2a\mathbf v_3&{}=\mathbf v_1x_1x_2\cdots x_n a\mathbf v_3\\
&{}\approx\mathbf v_1x_1x_2\cdots x_{n-1}ax_na\mathbf v_3&&\textup{by~\eqref{xzxyxty=xzyxty}}\\
&{}\approx\mathbf v_1ax_1x_2\cdots x_{n-1}ax_na \mathbf v_3&&\textup{by the induction assumption}\\
&{}\approx\mathbf v_1ax_1x_2\cdots x_na \mathbf v_3&&\textup{by~\eqref{xzxyxty=xzyxty}}\\
&=\mathbf w
\end{align*}
hold in $\mathbf O$.
\end{proof}

\begin{lemma}
\label{second occurrences}
Let $\mathbf v_1$ and $\mathbf v_2$ be words. If $\con(\mathbf v_2)=\{x_1,x_2,\dots,x_n\}\subseteq \con(\mathbf v_1)$ then the identity
$\mathbf v_1\mathbf v_2\approx \mathbf v_1x_1x_2\cdots x_n$ holds in $\mathbf O$.
\end{lemma}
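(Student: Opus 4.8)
The plan is to establish the identity in two stages: first reduce the tail $\mathbf v_2$ so that each letter of $\con(\mathbf v_2)$ occurs in it exactly once, and then reorder this reduced tail into the prescribed word $x_1x_2\cdots x_n$. Throughout, the crucial observation is that every letter occurring in the tail also occurs in $\mathbf v_1$, so all these letters are multiple in $\mathbf v_1\mathbf v_2$ and may be freely manipulated by the identities available in $\mathbf O$.

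For the first stage I would argue by induction on the length of $\mathbf v_2$, keeping its content fixed. If some letter $a$ occurs at least twice in the tail, I write the portion of the word up to and including the first two occurrences of $a$ so that $\mathbf v_1\mathbf v_2$ takes the form $(\mathbf v_1\mathbf s)\,a\,\mathbf m\,a\,\mathbf u$, where $a\notin\con(\mathbf s\mathbf m)$. Since $a\in\con(\mathbf v_1)\subseteq\con(\mathbf v_1\mathbf s)$ and every letter of $\mathbf m$ lies in $\con(\mathbf v_2)\subseteq\con(\mathbf v_1)$ and is therefore multiple in the whole word, Lemma~\ref{v_1av_2av_3=v_1v_2av_3} applies and deletes the first of these two occurrences of $a$. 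The resulting word has the form $\mathbf v_1\mathbf w'$ with $\con(\mathbf w')=\con(\mathbf v_2)$ but $|\mathbf w'|<|\mathbf v_2|$, so the induction hypothesis finishes this stage, producing $\mathbf v_1\mathbf v_2\approx\mathbf v_1\mathbf w$ where each of $x_1,x_2,\dots,x_n$ occurs in $\mathbf w$ exactly once.

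For the second stage it remains to reorder $\mathbf w$, which is a permutation of $x_1x_2\cdots x_n$, into the required order; it suffices to realise an arbitrary transposition of two adjacent letters of $\mathbf w$. Here I would use identity~\eqref{xtyzxy=xtyzyx}. If the adjacent pair to be swapped is $bc$, then both $b$ and $c$ occur already in the prefix $\mathbf v_1$, so in that prefix either some occurrence of $b$ precedes some occurrence of $c$ or vice versa; choosing the substitution and the direction of~\eqref{xtyzxy=xtyzyx} accordingly lets me rewrite the local factor $\cdots b\cdots c\cdots bc\cdots$ into $\cdots b\cdots c\cdots cb\cdots$ (or the symmetric variant), i.e.\ exactly swap the chosen adjacent pair while leaving the rest of the word untouched. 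Iterating these adjacent swaps sorts $\mathbf w$ into $x_1x_2\cdots x_n$, giving $\mathbf v_1\mathbf w\approx\mathbf v_1x_1x_2\cdots x_n$ and completing the proof.

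I expect the main obstacle to be the bookkeeping in the second stage: one must verify that~\eqref{xtyzxy=xtyzyx} genuinely permits swapping an arbitrary adjacent pair located anywhere in the tail (not merely at the very end) and that the needed earlier occurrences of $b$ and $c$ are always furnished by $\mathbf v_1$, so that the correct orientation of the identity can be selected in every case.
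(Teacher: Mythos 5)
Your proof is correct, but it is organised in the reverse order from the paper's and leans on a heavier tool for one of the two stages. The paper's own argument is a two-step rewrite: it first uses~\eqref{xtyzxy=xtyzyx} alone to sort the \emph{entire} tail, multiplicities included, into $x_1^{\occ_{x_1}(\mathbf v_2)}x_2^{\occ_{x_2}(\mathbf v_2)}\cdots x_n^{\occ_{x_n}(\mathbf v_2)}$ — every adjacent transposition in the tail is licensed because both letters involved already occur in $\mathbf v_1$, which is exactly the orientation argument of your second stage — and only then collapses each power $x_i^{\occ_{x_i}(\mathbf v_2)}$ to $x_i$ by~\eqref{xyx=xyxx}, which is immediate once equal letters have been grouped together behind an earlier occurrence in $\mathbf v_1$. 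You instead deduplicate first, and for that you invoke Lemma~\ref{v_1av_2av_3=v_1v_2av_3}, whose proof itself requires~\eqref{xzxyxty=xzyxty} in addition to~\eqref{xyx=xyxx} and~\eqref{xtyzxy=xtyzyx}; the paper gets the same effect from the single identity~\eqref{xyx=xyxx} because, after sorting, repeated letters sit side by side. Both routes hinge on the same key point, namely that~\eqref{xtyzxy=xtyzyx}, read in whichever direction matches the order in which $b$ and $c$ first appear in $\mathbf v_1$, realises an arbitrary adjacent swap anywhere in the tail; and the bookkeeping worry you flag at the end resolves exactly as you suspect, since the suffix after the swapped pair is absorbed into the context of the identity application and $\mathbf v_1$ always supplies the two earlier occurrences in one order or the other. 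The paper's ordering of the stages is the more economical; yours is sound and costs only the citation of a lemma already established.
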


\begin{proof}
The identities
$$
\mathbf v_1\mathbf v_2\stackrel{\eqref{xtyzxy=xtyzyx}}\approx \mathbf v_1x_1^{\occ_{x_1}(\mathbf v_2)}x_2^{\occ_{x_2}(\mathbf v_2)}\cdots x_n^{\occ_{x_n}(\mathbf v_2)}\stackrel{\eqref{xyx=xyxx}}\approx \mathbf v_1x_1x_2\cdots x_n
$$
hold in $\mathbf O$.
\end{proof}

Let $\mathbf u$ and $\mathbf v$ be words with decompositions~\eqref{decomposition of u} and~\eqref{decomposition of v} respectively. The identity $\mathbf u\approx\mathbf v$ is \emph{not well-balanced at} $x$ if $\occ_x(\mathbf u_i) \ne \occ_x(\mathbf v_i)$ for some $i$. For brevity, put $\mathbf O\{\Sigma\}=\mathbf O\wedge \var\{\Sigma\}$ for any identity system $\Sigma$.

\begin{lemma}
\label{reduction to balanced identities}
Let $\mathbf u\approx\mathbf v$ be an identity that holds in $\mathbf F\vee\mathbf E$. Then either
$$
\mathbf O\{\mathbf u\approx \mathbf v\} = \mathbf O\{\mathbf u'\approx \mathbf v'\} 
\text{ or }
\mathbf O\{\mathbf u\approx \mathbf v\} = \mathbf O\{\mathbf u'\approx \mathbf v',\,~\eqref{xyxztx=xyxzxtx}\}
$$
for some well-balanced identity $\mathbf u'\approx \mathbf v'$.
\end{lemma}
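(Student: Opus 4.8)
My plan is to reduce both words to a canonical shape inside $\mathbf O$ and then read off the only possible obstruction to being well-balanced, which will turn out to be governed by \eqref{xyxztx=xyxzxtx}. Since $\mathbf u\approx\mathbf v$ holds in $\mathbf F\vee\mathbf E$, Lemma~\ref{word problem F vee dual to E} supplies \eqref{sim(u)=sim(v) & mul(u)=mul(v)}--\eqref{t(u,x)=t(v,x)}, and Lemma~\ref{decompositions of u and v are equivalent} lets me write the decompositions of $\mathbf u$ and $\mathbf v$ with a common sequence of dividers as in \eqref{decomposition of u} and \eqref{decomposition of v}. Conditions \eqref{h_1(u,x)=h_1(v,x)}--\eqref{t(u,x)=t(v,x)} then say that for every multiple letter $x$ the blocks carrying its first, its second and its last occurrence are the same in $\mathbf u$ and in $\mathbf v$; in particular whether the first two occurrences of $x$ share a block is the same on both sides.

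Next I would normalise each word inside $\mathbf O$. Applying Lemma~\ref{v_1av_2av_3=v_1v_2av_3} repeatedly, I merge within every single block all repeated occurrences of a letter that has already appeared earlier (inside a block there are no simple letters, so the hypothesis $\con(\mathbf v_2)\subseteq\mul(\mathbf w)$ is automatic). This turns $\mathbf u$ into $\mathbf u^\ast$ and $\mathbf v$ into $\mathbf v^\ast$, with $\mathbf u\stackrel{\mathbf O}\approx\mathbf u^\ast$ and $\mathbf v\stackrel{\mathbf O}\approx\mathbf v^\ast$, in which every multiple letter $x$ occurs once in each block it meets, except in the block of its first occurrence, where it occurs twice precisely when its first two occurrences already lay in one block. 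Since the blocks of the first, second and last occurrence of each letter, as well as the ``one-or-two'' count in the first-occurrence block, are fixed by \eqref{h_1(u,x)=h_1(v,x)}--\eqref{t(u,x)=t(v,x)}, the words $\mathbf u^\ast$ and $\mathbf v^\ast$ agree block-by-block at all of these distinguished blocks. Hence the only way $\mathbf u^\ast\approx\mathbf v^\ast$ can fail to be well-balanced is that some multiple letter $x$ sits (once) in a block strictly between its second- and last-occurrence blocks on one side but not on the other.

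If no such discrepancy occurs then $\mathbf u^\ast\approx\mathbf v^\ast$ is already well-balanced, and taking $\mathbf u'=\mathbf u^\ast$, $\mathbf v'=\mathbf v^\ast$ gives $\mathbf O\{\mathbf u\approx\mathbf v\}=\mathbf O\{\mathbf u'\approx\mathbf v'\}$, the first alternative. Otherwise fix such an $x$ and a middle block $i$ with, say, $x\in\con(\mathbf u^\ast_i)$ and $x\notin\con(\mathbf v^\ast_i)$. I would apply to $\mathbf u^\ast\approx\mathbf v^\ast$ the substitution that keeps $x$, sends the divider $t_i$ to $z$, the divider $t_{i+1}$ to $t$, a divider separating the first two occurrences of $x$ to $y$, and every remaining letter to the empty word; after collapsing the resulting runs of $x$'s by Lemma~\ref{v_1av_2av_3=v_1v_2av_3}, the two sides become $xyxzxtx$ and $xyxztx$, so $\mathbf u^\ast\approx\mathbf v^\ast$ implies \eqref{xyxztx=xyxzxtx} modulo $\mathbf O$. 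Conversely, with \eqref{xyxztx=xyxzxtx} in hand I can delete every surviving middle occurrence: the single removal $xyxzxtx\approx xyxztx$ is \eqref{xyxztx=xyxzxtx} itself, and applying it in an arbitrary context (again tidying neighbours by Lemma~\ref{v_1av_2av_3=v_1v_2av_3}) clears the middle blocks of $\mathbf u^\ast$ and $\mathbf v^\ast$ and leaves a well-balanced identity $\mathbf u'\approx\mathbf v'$. Chaining $\mathbf O\{\mathbf u\approx\mathbf v\}=\mathbf O\{\mathbf u^\ast\approx\mathbf v^\ast\}=\mathbf O\{\mathbf u^\ast\approx\mathbf v^\ast,\,\eqref{xyxztx=xyxzxtx}\}=\mathbf O\{\mathbf u'\approx\mathbf v',\,\eqref{xyxztx=xyxzxtx}\}$ then yields the second alternative.

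The step I expect to cost the most care is matching the substituted identity with \eqref{xyxztx=xyxzxtx} exactly when the first two occurrences of $x$ lie in one block, for then there is no divider to send to $y$ and the substitution only produces $x^2zxtx\approx x^2ztx$. Here I would exploit that \eqref{xyx=xyxx} bridges the two patterns modulo $\mathbf O$: starting from $xyxztx$ one reaches $xyx^2ztx$ by \eqref{xyx=xyxx}, then $xyx^2zxtx$ by the instance $x^2ztx\approx x^2zxtx$, and finally $xyxzxtx$ by \eqref{xyx=xyxx} again, so that $x^2zxtx\approx x^2ztx$ and \eqref{xyxztx=xyxzxtx} are interderivable over $\mathbf O$; the same device, run in reverse, clears a middle occurrence standing behind a doubled first occurrence. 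Verifying this bridging together with the bookkeeping of which dividers survive the substitution, so that every configuration collapses to precisely \eqref{xyxztx=xyxzxtx}, is the technical heart of the argument.
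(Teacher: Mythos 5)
Your proof is correct, but it takes a genuinely different route from the paper's. The paper inducts on the number of letters (and blocks) at which $\mathbf u\approx\mathbf v$ fails to be well-balanced and repairs each failure by padding $\mathbf u$ \emph{upward} to match $\mathbf v$: if $x$ occurs in the offending block of $\mathbf u$, extra powers of $x$ are inserted using \eqref{xyx=xyxx}; if $x$ is absent from that block, the paper first obtains \eqref{xyxztx=xyxzxtx} --- not by a substitution as you do, but by citing Lemmas 5.1 and 5.3 of \cite{Lee-14}, which yield the identity $x^2yzx^2\approx x^2yxzx^2$ and hence \eqref{xyxztx=xyxzxtx} via \eqref{xyx=xyxx} --- and then pads the block with that identity. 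You instead normalise both sides \emph{downward} to a canonical form inside $\mathbf O$ (at most one occurrence of each multiple letter per block, at most two in its first-occurrence block) using Lemma~\ref{v_1av_2av_3=v_1v_2av_3}, so that the only surviving imbalance is a lone occurrence in a block strictly between the second- and last-occurrence blocks; you then extract \eqref{xyxztx=xyxzxtx} from any such imbalance by projecting onto four letters (substituting the empty word elsewhere, legitimate in a monoid variety) and collapsing powers, and finally use \eqref{xyxztx=xyxzxtx} to delete all middle occurrences at once. What each approach buys: yours is self-contained (no appeal to \cite{Lee-14}) and disposes of all imbalances simultaneously rather than one at a time; the paper's avoids your most delicate step --- the bridging between $x^2zxtx\approx x^2ztx$ and \eqref{xyxztx=xyxzxtx} needed when the first two occurrences of $x$ share a block --- because its citation-based derivation of \eqref{xyxztx=xyxzxtx} makes no case distinction on where those occurrences sit. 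Your supporting observations all check out: blocks contain only multiple letters, so the hypothesis of Lemma~\ref{v_1av_2av_3=v_1v_2av_3} is automatic; the normalisation preserves the data \eqref{h_1(u,x)=h_1(v,x)}--\eqref{t(u,x)=t(v,x)}, so the distinguished blocks keep matching counts; and deleting a middle occurrence requires only two earlier occurrences and one later one, which is exactly an instance of \eqref{xyxztx=xyxzxtx} with $y$ possibly evaluated at the empty word.
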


\begin{proof}
If $\mathbf u\approx \mathbf v$ is well-balanced then we are done. Suppose that $\mathbf u\approx \mathbf v$ is not well-balanced. By induction, we may assume that this identity is not well-balanced at precisely one letter $x$. In view of Lemma~\ref{word problem F vee dual to E}, the claims~\eqref{h_1(u,x)=h_1(v,x)}--\eqref{t(u,x)=t(v,x)} are true. Let~\eqref{decomposition of u} be the decomposition of $\mathbf u$. Then the decomposition of $\mathbf v$ has the form~\eqref{decomposition of v} by Lemma~\ref{decompositions of u and v are equivalent}. By induction, we may assume that there is $k$ such that $\occ_x(\mathbf u_k)\ne\occ_x(\mathbf v_k)$ but $\occ_x(\mathbf u_i)=\occ_x(\mathbf v_i)$ for any $i\ne k$. We may assume without any loss that $\occ_x(\mathbf u_k)<\occ_x(\mathbf v_k)$. Put 
$$
\mathbf u'=\prod_{i=0}^{k-1}(\mathbf u_it_{i+1}),\ \mathbf u''=\prod_{i=k+1}^m(t_i\mathbf u_i),\ \mathbf v'=\prod_{i=0}^{k-1}(\mathbf v_it_{i+1}),\ \mathbf v''=\prod_{i=k+1}^m(t_i\mathbf v_i).
$$

Suppose that $x\in\con(\mathbf u_k)$. It follows that $\occ_x(\mathbf v_k)\ge2$. Therefore, the first and the second occurrences of $x$ in $\mathbf v$ lie in the subword $\mathbf v'\mathbf v_k$. In view of the claims~\eqref{h_1(u,x)=h_1(v,x)} and~\eqref{h_2(u,x)=h_2(v,x)}, the first and the second occurrences of $x$ in $\mathbf u$ lie in the subword $\mathbf u'\mathbf u_k$. Then there are subwords $\mathbf u_k'$ and $\mathbf u_k''$ of $\mathbf u_k$ such that $\mathbf u_k=\mathbf u_k'x\mathbf u_k''$ and $x\in\con(\mathbf u'\mathbf u_k')$. Put 
$$
\mathbf p=\mathbf u_k'x^{\occ_x(\mathbf v_k)-\occ_x(\mathbf u_k)+1}\mathbf u_k''.
$$
The identity $\mathbf u\stackrel{\eqref{xyx=xyxx}}\approx \mathbf u'\mathbf p\mathbf u''$ holds in $\mathbf O$, whence $\mathbf O\{\mathbf u\approx\mathbf v\}= \mathbf O\{\mathbf u'\mathbf p\mathbf u''\approx\mathbf v\}$. It remains to note that the identity $\mathbf u'\mathbf p\mathbf u''\approx\mathbf v$ is well-balanced.  

Suppose now that $x\notin\con(\mathbf u_k)$. In view of~\cite[Lemma 5.1]{Lee-14}, $\mathbf Q\nsubseteq\mathbf O\{\mathbf u\approx \mathbf v\}$. This fact and~\cite[Lemma 5.3]{Lee-14} imply that $\mathbf O\{\mathbf u\approx \mathbf v\}$ satisfies the identity \begin{equation}
\label{xxyzxx=xxyxzxx}
x^2yzx^2\approx x^2yxzx^2.
\end{equation}
Then the identities
$$
xyxztx\stackrel{\eqref{xyx=xyxx}}\approx xyx^2ztx^2\stackrel{\eqref{xxyzxx=xxyxzxx}}\approx  xyx^2zxtx^2\stackrel{\eqref{xyx=xyxx}}\approx xyxzxtx
$$
hold in the variety $\mathbf O\{\mathbf u\approx \mathbf v\}$. We see that this variety satisfies the identity~\eqref{xyxztx=xyxzxtx}. The second occurrence of $x$ in $\mathbf v$ does not lie in the block $\mathbf v_k$ by the claim~\eqref{h_2(u,x)=h_2(v,x)}. If $t_j=h_2(\mathbf v,x)$ for some $j>k$ then we obtain a contradiction with the claim~\eqref{h_1(u,x)=h_1(v,x)} and the fact that $x\in\con(\mathbf v_k)\setminus\con(\mathbf u_k)$. So, the second occurrence of $x$ in $\mathbf v$ lies in the subword $\mathbf v'$. Taking into account the claims~\eqref{h_1(u,x)=h_1(v,x)} and~\eqref{h_2(u,x)=h_2(v,x)} again, we get that the first and the second occurrences of $x$ in $\mathbf u$ lie in the subword $\mathbf u'$. Further, the fact that $x\in\con(\mathbf v_k)\setminus\con(\mathbf u_k)$ and the claim~\eqref{t(u,x)=t(v,x)} imply that the latest occurrences of $x$ in $\mathbf u$ and $\mathbf v$ lie in the subwords $\mathbf u''$ and $\mathbf v''$ respectively. Put $\mathbf q=\mathbf u_kx^{\occ_x(\mathbf v_k)}$. Then the identity $\mathbf u\stackrel{\eqref{xyxztx=xyxzxtx}}\approx \mathbf u'\mathbf q\mathbf u''$ holds in the variety $\mathbf O\{\eqref{xyxztx=xyxzxtx}\}$, whence $\mathbf O\{\mathbf u\approx\mathbf v\}= \mathbf O\{\mathbf u'\mathbf q\mathbf u''\approx\mathbf v,\,\eqref{xyxztx=xyxzxtx}\}$. It remains to note that the identity $\mathbf u'\mathbf q\mathbf u''\approx\mathbf v$ is well-balanced.  
\end{proof}

The identity $\mathbf u\approx\mathbf v$ is said to be 1-\emph{invertible} if $\mathbf u=\mathbf w'xy\mathbf w''$ and $\mathbf v=\mathbf w'yx\mathbf w''$ for some possibly empty words $\mathbf w'$, $\mathbf w''$ and some letters $x,y\in\con(\mathbf w'\mathbf w'')$. Let now $n>1$. The identity $\mathbf u\approx\mathbf v$ is said to be $n$-\emph{invertible} if there exists a sequence of words 
$$
\mathbf u=\mathbf w_0,\mathbf w_1,\dots,\mathbf w_n=\mathbf v
$$
such that the identity $\mathbf w_j\approx\mathbf w_{j+1}$ is $1$-\emph{invertible} for any $j\in\{0,1,\dots,n-1\}$ and $n$ is the least number with such a property. For convenience, the trivial identity is called 0-\emph{invertible}.

\begin{lemma}
\label{basis for subvarieties of O}
Let $\mathbf u\approx \mathbf v$ be a well-balanced identity. Then the variety $\mathbf O\{\mathbf u\approx \mathbf v\}$ can be defined by the identities~\eqref{xxyy=yyxx} and~\eqref{xzxyxty=xzyxty} together with some of the following identities:~\eqref{xyzxy=yxzxy},
\begin{align}
\label{yxxtxy=xyxtxy}
yx^2txy&\approx xyxtxy,\\
\label{xxytxy=xyxtxy}
x^2ytxy&\approx xyxtxy,\\
\notag
\alpha_n:\enskip xy\prod_{i=1}^{n+1} (t_i \mathbf e_i)&\approx yx\prod_{i=1}^{n+1} (t_i \mathbf e_i),\\
\notag
\beta_n:\enskip yx^2\prod_{i=2}^{n+1} (t_i \mathbf e_i)&\approx xyx\prod_{i=1}^n (t_i \mathbf e_i),\\
\notag
\gamma_n:\enskip x^2y\prod_{i=1}^{n+1} (t_i \mathbf e_i)&\approx xyx\prod_{i=1}^{n+1} (t_i \mathbf e_i),\\
\notag
\gamma_n':\enskip x^2y\prod_{i=2}^{n+1} (t_i \mathbf e_i)&\approx xyx\prod_{i=2}^{n+1} (t_i \mathbf e_i),
\end{align}
where $n \in \mathbb N$ and
$$
\mathbf e_i=
\begin{cases}
x&\text{if }i\text{ is odd},\\
y&\text{if }i\text{ is even}.
\end{cases}
$$
\end{lemma}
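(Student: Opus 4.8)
The plan is to show that, modulo the identities~\eqref{xxyy=yyxx} and~\eqref{xzxyxty=xzyxty} defining $\mathbf O$, the single well-balanced identity $\mathbf u\approx\mathbf v$ is interderivable with a finite subset $\Sigma_0$ of the listed identities; the lemma then follows because $\mathbf O\{\mathbf u\approx\mathbf v\}$ and $\mathbf O\{\Sigma_0\}$ have the same identities. Since $\mathbf u\approx\mathbf v$ is well-balanced, its decompositions~\eqref{decomposition of u} and~\eqref{decomposition of v} share the dividers $t_0,t_1,\dots,t_m$ and satisfy $\occ_x(\mathbf u_i)=\occ_x(\mathbf v_i)$ for every letter $x$ and every $i$; in particular each block $\mathbf u_i$ is a rearrangement of $\mathbf v_i$. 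First I would use Lemmas~\ref{v_1av_2av_3=v_1v_2av_3} and~\ref{second occurrences}, together with the basic identities~\eqref{xyx=xyxx} and~\eqref{xtyzxy=xtyzyx} supplied by Lemma~\ref{identities in O}, to replace $\mathbf u$ and $\mathbf v$ by $\mathbf O$-equivalent normal forms in which, inside every block, each already-occurring letter is collapsed to a single retained occurrence and every trailing stretch of already-occurring letters is canonically ordered. After this reduction the only freedom left in a block is the left-to-right order of the first occurrences of its letters; if the two normal forms coincide then $\mathbf u\approx\mathbf v$ holds in $\mathbf O$ and we take $\Sigma_0=\varnothing$.

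If the normal forms differ, I would locate the leftmost block $\mathbf u_k,\mathbf v_k$ and the leftmost position in it at which they diverge, say with a letter $x$ on the left in $\mathbf u$ and a letter $y$ on the left in $\mathbf v$. Because the blocks are rearrangements, the discrepancy can be resolved by a sequence of adjacent transpositions, that is, by passing through a chain of $1$-invertible steps. Those transpositions that swap two letters each already occurring to the left are $\mathbf O$-consequences by the reduction lemmas, so the essential content reduces to transposing a pair whose relative order genuinely matters -- typically two first occurrences, or a first occurrence against the single retained repeat of the other letter. For each such elementary transposition I would pass to the two-letter identity obtained by deleting every multiple letter other than $x$ and $y$ while keeping the dividers, and then read off from the occurrence pattern which listed family it matches: the index $n$ equals the number of $x$--$y$ alternations occurring to the right of the transposed pair, while the family among~\eqref{xyzxy=yxzxy},~\eqref{yxxtxy=xyxtxy},~\eqref{xxytxy=xyxtxy}, $\alpha_n$, $\beta_n$, $\gamma_n$, $\gamma_n'$ is dictated by whether the swap is between two first occurrences or involves a retained second occurrence, and by whether a divider immediately follows the pair (this last point accounts for the difference between the products starting at $i=1$ and at $i=2$).

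Having assembled the resulting finite set $\Sigma_0$ of listed identities, I would prove the two inclusions. For $\mathbf O\{\mathbf u\approx\mathbf v\}\subseteq\mathbf O\{\Sigma_0\}$ it suffices to derive each member of $\Sigma_0$ from $\mathbf O$ and $\mathbf u\approx\mathbf v$, and by construction each such member is a substitution instance of $\mathbf u\approx\mathbf v$ followed by an $\mathbf O$-reduction. For the reverse inclusion $\mathbf O\{\Sigma_0\}\subseteq\mathbf O\{\mathbf u\approx\mathbf v\}$ I would build a chain $\mathbf u=\mathbf w_0,\mathbf w_1,\dots,\mathbf w_r=\mathbf v$ in which every step is either an $\mathbf O$-reduction or an in-context application of an instance of some identity in $\Sigma_0$, guided by the same sequence of elementary transpositions, and then induct on a complexity measure such as the least $n$ for which the normalized identity is $n$-invertible, or the number of disagreeing blocks.

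The main obstacle I anticipate is the case analysis at the heart of the second and third steps: correctly matching every admissible occurrence pattern of the transposed pair $x,y$ to exactly one listed family, pinning the index $n$, and verifying in each case that the two-letter core identity is genuinely $\mathbf O$-equivalent to the chosen listed identity in both directions. The delicate points will be distinguishing $\beta_n$, $\gamma_n$ and $\gamma_n'$, which all compare a first occurrence with a doubled occurrence but differ in the placement of the first divider; confirming that the $x$--$y$ alternation count is an $\mathbf O$-invariant so that $n$ is well defined; and checking that the reduction lemmas never collapse an alternation that the selected $\alpha_n$ is meant to record.
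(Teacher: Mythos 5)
Your overall strategy coincides with the paper's: induct on the invertibility degree of the well-balanced identity, peel off one adjacent transposition at a time, note that a swap of two letters both occurring earlier is already an $\mathbf O$-consequence (via~\eqref{xtyzxy=xtyzyx} and Lemma~\ref{v_1av_2av_3=v_1v_2av_3}), and for each essential swap derive the matching listed identity as a substitution instance of $\mathbf u\approx\mathbf v$ while, conversely, simulating the swap inside $\mathbf O$ augmented by that identity. This is exactly the shape of the paper's induction step.

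The genuine gap is that the entire substance of the proof --- the classification you defer as ``the main obstacle'' --- is left undone, and the invariants you propose to run it on are not the right ones. The paper's induction step, writing $\mathbf v=\mathbf v'\mathbf a\,yx\,\mathbf b\mathbf v''$, splits according to where the \emph{other} occurrences of $x$ lie: nowhere else near the block (Case~1, yielding~\eqref{xyzxy=yxzxy} or $\alpha_n$), later in the same block, i.e.\ in $\mathbf b$ (Case~2, yielding~\eqref{yxxtxy=xyxtxy} or $\beta_n$), or before the pair, i.e.\ in $\mathbf v'\mathbf a$ (Case~3, yielding~\eqref{xxytxy=xyxtxy} or $\gamma_n$, $\gamma_n'$); within each case the finite identity arises when $x$ and $y$ recur together in a common later block, and the parametrized family arises when they alternate through disjoint later blocks, the index counting the alternating groups, with $\gamma_n$ versus $\gamma_n'$ decided by whether the alternation starts with $x$ or with $y$. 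Your stated criteria do not match this: ``whether a divider immediately follows the pair'' is not what separates the families, and your normal-form claim --- that after collapsing repeats ``the only freedom left in a block is the left-to-right order of the first occurrences'' --- is false, since $x^2y$, $xyx$ and $yx^2$ are all normal in your sense, have the same order of first occurrences, yet are pairwise inequivalent modulo $\mathbf O$ when followed by further dividers; the placement of the retained repeat relative to the first occurrence of $y$ is precisely what~\eqref{yxxtxy=xyxtxy},~\eqref{xxytxy=xyxtxy}, $\beta_n$, $\gamma_n$, $\gamma_n'$ record. A further technical snag: extracting the two-letter identity by \emph{deletion alone}, as you propose, is insufficient in the paper's Case~2; there one must substitute $xt_{i+1}$ for the divider $t_{i+1}$ (not merely erase letters) to force the trailing occurrence of $x$ that makes Lemma~\ref{v_1av_2av_3=v_1v_2av_3} and Lemma~\ref{second occurrences} normalize the instance to~\eqref{yxxtxy=xyxtxy} or $\beta_n$. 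So the proposal names the right tools and the right induction, but the case analysis on which the lemma stands is missing and, as sketched, would begin from an incorrect invariant.
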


\begin{proof}
Put 
$$
\Phi=\{\eqref{xyzxy=yxzxy},\,\eqref{yxxtxy=xyxtxy},\,\eqref{xxytxy=xyxtxy},\,\alpha_n,\,\beta_n,\,\gamma_n,\,\gamma_n' \mid n\in\mathbb N\}.
$$
Since the identity $\mathbf u\approx\mathbf v$ is well-balanced, it is $n$-invertible for some $n\ge 0$. We will use induction by $n$.

\medskip

\emph{Induction base}. Suppose that $n=0$. Here $\mathbf u=\mathbf v$, whence $\mathbf O\{\mathbf u\approx\mathbf v\}=\mathbf O\{\emptyset\}$.

\medskip

\emph{Induction step}. Let now $n>0$. Let~\eqref{decomposition of u} be the decomposition of $\mathbf u$. Then the decomposition of $\mathbf v$ has the form~\eqref{decomposition of v} because $\mathbf u\approx\mathbf v$ is well-balanced. There is $0\le i\le m$ such that $\mathbf u_i\ne \mathbf v_i$. Let $\mathbf p$ be the greatest common prefix of $\mathbf u_i$ and $\mathbf v_i$. Suppose that $\mathbf u_i=\mathbf px\mathbf u_i'$ for some letter $x$ and some word $\mathbf u_i'$. Since $\mathbf u\approx\mathbf v$ is well-balanced, there are words $\mathbf a,\mathbf b$ and a letter $y$ such that $\mathbf v_i=\mathbf p\mathbf a\,yx\,\mathbf b$ and $x\notin\con(\mathbf a y)$. We note also that $y\in\con(\mathbf u_i')$ because $\mathbf u\approx\mathbf v$ is well-balanced. Put 
$$
\mathbf v'=\prod_{j=0}^{i-1}( \mathbf v_j t_{j+1})\mathbf p, \ \mathbf v''=\prod_{j=i+1}^m(t_j \mathbf v_j ) \text{ and } \mathbf w=\mathbf v'\mathbf a\, xy\,\mathbf b\mathbf v''.
$$ 

We are going to verify that
$$
\text{either } \mathbf O\{\mathbf u\approx \mathbf v\}=\mathbf O\{\mathbf u\approx \mathbf w\} \text{ or } \mathbf O\{\mathbf u\approx \mathbf v\}=\mathbf O\{\mathbf u\approx \mathbf w,\sigma\}
$$
for some $\sigma\in\Phi$.

Suppose that $x,y\in \con(\mathbf v'\mathbf a\mathbf b)$. If $x,y\in\con(\mathbf b)$ then the identities
\begin{align*}
\mathbf w&{}=\mathbf v'\mathbf a \,xy\,\mathbf b\mathbf v''\\
&{}\approx\mathbf v'\mathbf a \,x^2y^2\,\mathbf b\mathbf v''&&\textup{by Lemma~\ref{v_1av_2av_3=v_1v_2av_3}}\\
&\approx \mathbf v'\mathbf a\,y^2x^2\,\mathbf b\mathbf v''&&\textup{by~\eqref{xxyy=yyxx}}\\
&\approx\mathbf v'\mathbf a\,yx\,\mathbf b\mathbf v'''&&\textup{by Lemma~\ref{v_1av_2av_3=v_1v_2av_3}}\\
&=\mathbf v
\end{align*}
hold in $\mathbf O$, whence $\mathbf O\{\mathbf u\approx \mathbf v\}=\mathbf O\{\mathbf u\approx \mathbf w\}$. So, we may assume that either $x\notin\con(\mathbf b)$ or $y\notin\con(\mathbf b)$. By symmetry, it suffices to consider the case $y\notin\con(\mathbf b)$. Then $y\in\con(\mathbf v'\mathbf a)$. If $x\in\con(\mathbf b)$ then the identities
\begin{align*}
\mathbf w&{}=\mathbf v'\mathbf a\, xy\,\mathbf b\mathbf v''\\
&{}\approx\mathbf v'\mathbf a\, x^2y\,\mathbf b\mathbf v''&&\textup{by Lemma~\ref{v_1av_2av_3=v_1v_2av_3}}\\
&{}\approx\mathbf v'\mathbf a \,x^2y^2\,\mathbf b\mathbf v''&&\textup{by~\eqref{xyx=xyxx}}\\
&\approx \mathbf v'\mathbf a\,y^2x^2\,\mathbf b\mathbf v''&&\textup{by~\eqref{xxyy=yyxx}}\\
&{}\approx\mathbf v'\mathbf a\, yx^2\,\mathbf b\mathbf v''&&\textup{by~\eqref{xyx=xyxx}}\\
&\approx\mathbf v'\mathbf a\,yx\,\mathbf b\mathbf v'''&&\textup{by Lemma~\ref{v_1av_2av_3=v_1v_2av_3}}\\
&=\mathbf v
\end{align*}
hold in $\mathbf O$. Therefore, $\mathbf O\{\mathbf u\approx \mathbf v\}=\mathbf O\{\mathbf u\approx \mathbf w\}$. Finally, if $x\notin\con(\mathbf b)$ then $x\in \con(\mathbf v'\mathbf a)$ and the identities
$$
\mathbf w=\mathbf v'\mathbf a\, xy\,\mathbf b\mathbf v''\stackrel{\eqref{xtyzxy=xtyzyx}}\approx \mathbf v'\mathbf a\, yx\,\mathbf b\mathbf v''=\mathbf v
$$
hold in $\mathbf O$, whence $\mathbf O\{\mathbf u\approx \mathbf v\}=\mathbf O\{\mathbf u\approx \mathbf w\}$ again. Thus, it remains to consider the case when either $x\notin \con(\mathbf v'\mathbf a\mathbf b)$ or $y\notin \con(\mathbf v'\mathbf a\mathbf b)$. By symmetry, we may assume without loss of generality that $y\notin \con(\mathbf v'\mathbf a\mathbf b)$. Then $y\in \con(\mathbf v'')$ because $y\in \mul(\mathbf v)$.

Suppose that $x\in \con(\mathbf v'\mathbf a)\cap\con(\mathbf b)$. Then the identities
\begin{align*}
\mathbf w&{}=\mathbf v'\mathbf a \, xy\, \mathbf b\mathbf v''\\
&{}\approx\mathbf v'\mathbf a\, y\, \mathbf b\mathbf v''&&\textup{by Lemma~\ref{v_1av_2av_3=v_1v_2av_3}}\\
&\approx\mathbf v'\mathbf a\, yx\, \mathbf b\mathbf v'''&&\textup{by Lemma~\ref{v_1av_2av_3=v_1v_2av_3}}\\
&=\mathbf v
\end{align*}
hold in $\mathbf O$, whence $\mathbf O\{\mathbf u\approx \mathbf v\}=\mathbf O\{\mathbf u\approx \mathbf w\}$. So, we may assume that either $x\notin \con(\mathbf v'\mathbf a)$ or $x\notin \con(\mathbf b)$. Put
$$
\mathbf w_j=t_{i+1}\prod_{s=i+2}^j(\mathbf v_{s-1}t_s)\ \text{ and }\ \mathbf w_j'=\prod_{s=j+1}^m(t_s\mathbf v_s)
$$
for any $j>i$.

\smallskip

\emph{Case }1: $x\notin \con(\mathbf v'\mathbf a\mathbf b)$. Then $x\in \con(\mathbf v'')$ because $x\in \mul(\mathbf v)$.

\smallskip

\emph{Subcase }1.1: $x,y\in\con(\mathbf v_k)$ for some $k>i$. Then, since $\mathbf u\approx \mathbf v$ is well-balanced, $\mathbf u(x,y,t_{i+1})=xyt_{i+1}\mathbf p$ and $\mathbf v(x,y,t_{i+1})=yxt_{i+1}\mathbf q$ where $\con(\mathbf p)=\con(\mathbf q)=\{x,y\}$. According to Lemma~\ref{second occurrences}, 
$$
\mathbf O\{\mathbf u(x,y,t_{i+1})\approx \mathbf v(x,y,t_{i+1})\}=\mathbf O\{\eqref{xyzxy=yxzxy}\}.
$$
Therefore, $\mathbf O\{\mathbf u\approx \mathbf v\}$ satisfies~\eqref{xyzxy=yxzxy}. On the other hand, the identities
\begin{align*}
\mathbf w&{}=\mathbf v'\mathbf a\, xy\,\mathbf b\mathbf w_k\mathbf v_k\mathbf w_k'\\
&{}\approx\mathbf v'\mathbf a\, xy\,\mathbf b\mathbf w_k\,xy\,\mathbf v_k\mathbf w_k'&&\textup{by Lemma~\ref{v_1av_2av_3=v_1v_2av_3}}\\
&\approx \mathbf v'\mathbf a\,yx\,\mathbf b\mathbf w_k\,xy\,\mathbf v_k\mathbf w_k'&&\textup{by~\eqref{xyzxy=yxzxy}}\\
&\approx\mathbf v'\mathbf a\,yx\,\mathbf b\mathbf w_k\mathbf v_k\mathbf w_k'&&\textup{by Lemma~\ref{v_1av_2av_3=v_1v_2av_3}}\\
&=\mathbf v
\end{align*}
hold in $\mathbf O\{\eqref{xyzxy=yxzxy}\}$, whence $\mathbf O\{\mathbf u\approx \mathbf v\}=\mathbf O\{\mathbf u\approx \mathbf w, \,\eqref{xyzxy=yxzxy}\}$.

\smallskip

\emph{Subcase }1.2: $|\con(\mathbf u_j)\cap \{x,y\}|\le 1$ for any $j>i$. Then we may assume without any loss that there exists a subsequence $k_1,k_2,\dots, k_r=m+1$ of $i+1,i+2,\dots,m+1$ such that
\begin{align}
\label{k_1 emptyset}
&\con(\mathbf u_{i+1}\mathbf u_{i+2}\cdots\mathbf u_{k_1-1})\cap \{x,y\}=\emptyset;\\
\label{k_s odd x}
&\text{if }s \text{ is odd and }s\le r-1 \text{ then }\con(\mathbf u_{k_s}\mathbf u_{k_s+1}\cdots\mathbf u_{k_{s+1}-1})\cap \{x,y\}=\{x\};\\
\label{k_s even y}
&\text{if }s \text{ is even and }s\le r-1 \text{ then }\con(\mathbf u_{k_s}\mathbf u_{k_s+1}\cdots\mathbf u_{k_{s+1}-1})\cap \{x,y\}=\{y\}.
\end{align}
Clearly, $r>2$ because the letters $x$ and $y$ are multiple in $\mathbf u$ and $\mathbf v$. Then, since $\mathbf u\approx \mathbf v$ is well-balanced,
$$
\begin{aligned}
&\mathbf u(x,y,t_{k_1},t_{k_2},\dots,t_{k_{r-1}})=xy\prod_{j=1}^{r-1}( t_{k_j}\mathbf f_j),\\ 
&\mathbf v(x,y,t_{k_1},t_{k_2},\dots,t_{k_{r-1}})=yx\prod_{j=1}^{r-1}( t_{k_j}\mathbf f_j)
\end{aligned}
$$
where 
\begin{equation}
\label{f_j 1} 
\mathbf f_j=
\begin{cases}
x^{a_j}&\text{if }j\text{ is odd},\\
y^{a_j}&\text{if }j\text{ is even}
\end{cases}
\end{equation}
for some $a_j\in\mathbb N$ and $j=1,2,\dots,r-1$. In view of Lemma~\ref{second occurrences}, 
$$
\mathbf O\{\mathbf u(x,y,t_{k_1},t_{k_2},\dots,t_{k_{r-1}})\approx \mathbf v(x,y,t_{k_1},t_{k_2},\dots,t_{k_{r-1}})\}=\mathbf O\{\alpha_{r-2}\}.
$$
Therefore, $\mathbf O\{\mathbf u\approx \mathbf v\}$ satisfies the identity $\alpha_{r-2}$. On the other hand, the identities
$$
\mathbf w=\mathbf v'\mathbf a\,xy\,\mathbf b\mathbf v''\stackrel{\alpha_{r-2}}\approx \mathbf v'\mathbf a\,yx\,\mathbf b\mathbf v''=\mathbf v
$$
hold in $\mathbf O\{\alpha_{r-2}\}$, whence $\mathbf O\{\mathbf u\approx \mathbf v\}=\mathbf O\{\mathbf u\approx \mathbf w,\,\alpha_{r-2}\}$.

\smallskip

\emph{Case }2: $x\notin \con(\mathbf v'\mathbf a)$ but $x\in \con(\mathbf b)$.

\smallskip

\emph{Subcase }2.1: $x,y\in\con(\mathbf v_k)$ for some $k>i$. Since $\mathbf u\approx \mathbf v$ is well-balanced, substituting $xt_{i+1}$ for $t_{i+1}$ and~1 for all letters occurring in $\mathbf u\approx \mathbf v$ except $x$, $y$, $t_{i+1}$, we obtain the identity
$$
x^cyx^dt_{i+1}\mathbf p\approx yx^et_{i+1}\mathbf q
$$
where $c,d\ge 1$, $e\ge 2$ and $\con(\mathbf p)=\con(\mathbf q)=\{x,y\}$. Lemma~\ref{v_1av_2av_3=v_1v_2av_3} implies that
$$
\mathbf O\{x^cyx^dt_{i+1}\mathbf p\approx yx^et_{i+1}\mathbf q\}=\mathbf O\{xyxt_{i+1}\mathbf p\approx yx^2t_{i+1}\mathbf q\}.
$$
According to Lemma~\ref{second occurrences}, 
$$
\mathbf O\{xyxt_{i+1}\mathbf p\approx yx^2t_{i+1}\mathbf q\}=\mathbf O\{\eqref{yxxtxy=xyxtxy}\}.
$$
Therefore, $\mathbf O\{\mathbf u\approx \mathbf v\}$ satisfies~\eqref{yxxtxy=xyxtxy}. On the other hand, the identities
\begin{align*}
\mathbf w&{}=\mathbf v'\mathbf a\, xy\,\mathbf b\mathbf w_k\mathbf v_k\mathbf w_k'\\
&{}\approx\mathbf v'\mathbf a\, xyx\,\mathbf b\mathbf w_k\,xy\,\mathbf v_k\mathbf w_k'&&\textup{by Lemma~\ref{v_1av_2av_3=v_1v_2av_3}}\\
&\approx \mathbf v'\mathbf a\,yx^2\,\mathbf b\mathbf w_k\,xy\,\mathbf v_k\mathbf w_k'&&\textup{by~\eqref{yxxtxy=xyxtxy}}\\
&\approx\mathbf v'\mathbf a\,yx\,\mathbf b\mathbf w_k\mathbf v_k\mathbf w_k'&&\textup{by Lemma~\ref{v_1av_2av_3=v_1v_2av_3}}\\
&=\mathbf v
\end{align*}
hold in $\mathbf O\{\eqref{yxxtxy=xyxtxy}\}$, whence $\mathbf O\{\mathbf u\approx \mathbf v\}=\mathbf O\{\mathbf u\approx \mathbf w,\,~\eqref{yxxtxy=xyxtxy}\}$.

\smallskip

\emph{Subcase }2.2: $|\con(\mathbf u_j)\cap \{x,y\}|\le 1$ for any $j>i$. Then there exists a subsequence $k_1,k_2,\dots, k_r=m+1$ of $i+1,i+2,\dots,m+1$ such that
\begin{align}
\notag
&\con(\mathbf u_{i+1}\mathbf u_{i+2}\cdots \mathbf u_{{k_1}-1})\cap \{x,y\}\subseteq\{x\};\\
\label{k_s even x}
&\text{if }s \text{ is even and }s\le r-1 \text{ then }\con(\mathbf u_{k_s}\mathbf u_{k_s+1}\cdots\mathbf u_{k_{s+1}-1})\cap \{x,y\}=\{x\};\\
\label{k_s odd y}
&\text{if }s \text{ is odd and }s\le r-1 \text{ then }\con(\mathbf u_{k_s}\mathbf u_{k_s+1}\cdots\mathbf u_{k_{s+1}-1})\cap \{x,y\}=\{y\}.
\end{align}
Clearly, $r>1$ because $y$ is multiple in $\mathbf u$ and $\mathbf v$. Since $\mathbf u\approx \mathbf v$ is well-balanced, substituting $xt_{k_1}$ for $t_{k_1}$ and~1 for all letters occurring in $\mathbf u\approx \mathbf v$ except $x$, $y$, $t_{k_1},t_{k_2},\dots,t_{k_{r-1}}$, we obtain the identity
\begin{equation}
\label{long identity beta}
x^cyx^d\prod_{j=1}^{r-1}( t_{k_j}\mathbf f_j)\approx yx^e\prod_{j=1}^{r-1}( t_{k_j}\mathbf f_j),
\end{equation}
where $c,d\ge 1$, $e\ge 2$ and 
\begin{equation}
\label{f_j 2} 
\mathbf f_j=
\begin{cases}
y^{a_j}&\text{if }j\text{ is odd},\\
x^{a_j}&\text{if }j\text{ is even}
\end{cases}
\end{equation}
for some $a_j\in\mathbb N$ and $j=1,2,\dots,r-1$. Lemma~\ref{v_1av_2av_3=v_1v_2av_3} implies that
$$
\mathbf O\{\eqref{long identity beta}\}=\mathbf O\bigl\{xyx\prod_{j=1}^{r-1}( t_{k_j}\mathbf f_j)\approx yx^2\prod_{j=1}^{r-1}( t_{k_j}\mathbf f_j)\bigr\}.
$$
In view of Lemma~\ref{second occurrences}, 
$$
\mathbf O\bigl\{xyx\prod_{j=1}^{r-1}(t_{k_j}\mathbf f_j)\approx yx^2\prod_{j=1}^{r-1}( t_{k_j}\mathbf f_j)\bigr\}=\mathbf O\{\beta_{r-1}\}.
$$
Therefore, $\mathbf O\{\mathbf u\approx \mathbf v\}$ satisfies $\beta_{r-1}$. On the other hand, the identities
\begin{align*}
\mathbf w&{}=\mathbf v'\mathbf a \,xy\,\mathbf b\mathbf v''\\
&{}\approx\mathbf v'\mathbf a \,xyx\,\mathbf b\mathbf v''&&\textup{by Lemma~\ref{v_1av_2av_3=v_1v_2av_3}}\\
&\approx\mathbf v'\mathbf a\,yx^2\,\mathbf b\mathbf v'''&&\textup{by the identity }\beta_{r-1}\\
&\approx\mathbf v'\mathbf a\,yx\,\mathbf b\mathbf v'''&&\textup{by Lemma~\ref{v_1av_2av_3=v_1v_2av_3}}\\
&=\mathbf v
\end{align*}
hold in $\mathbf O\{\beta_{r-1}\}$, whence $\mathbf O\{\mathbf u\approx \mathbf v\}=\mathbf O\{\mathbf u\approx \mathbf w,\,\beta_{r-1}\}$.

\smallskip

\emph{Case }3: $x\in \con(\mathbf v'\mathbf a)$ but $x\notin \con(\mathbf b)$. Then $x\notin\con(\mathbf u_i')$ because $x\notin\con(\mathbf a)$ and $\con(\mathbf u_i)=\con(\mathbf v_i)$.

\smallskip

\emph{Subcase }3.1: $x,y\in\con(\mathbf v_k)$ for some $k>i$. Then, since $\mathbf u\approx \mathbf v$ is well-balanced,
$$
\mathbf u(x,y,t_{i+1})=x^{c+1}yt_{i+1}\mathbf p\text{ and }  \mathbf v(x,y,t_{i+1})=x^cyxt_{i+1}\mathbf q
$$
where $c\in\mathbb N$ and $\con(\mathbf p)=\con(\mathbf q)=\{x,y\}$. The identities
$$
\mathbf u(x,y,t_{i+1})\stackrel{\eqref{xyx=xyxx}}\approx x^2yt_{i+1}\mathbf p \ \text{ and } \ \mathbf v(x,y,t_{i+1})\stackrel{\eqref{xzxyxty=xzyxty}}\approx xyxt_{i+1}\mathbf q
$$
hold in $\mathbf O$. According to Lemma~\ref{second occurrences},
$$
\mathbf O\{x^2yt_{i+1}\mathbf p\approx xyxt_{i+1}\mathbf q\}=\mathbf O\{\eqref{xxytxy=xyxtxy}\}.
$$
Therefore, $\mathbf O\{\mathbf u\approx \mathbf v\}$ satisfies~\eqref{xxytxy=xyxtxy}. On the other hand, the identities
\begin{align*}
\mathbf w&{}=\mathbf v'\mathbf a\, xy\,\mathbf b\mathbf w_k\mathbf v_k\mathbf w_k'\\
&{}\approx\mathbf v'\mathbf a \,x^2y\,\mathbf b\mathbf w_k\,xy\,\mathbf v_k\mathbf w_k'&&\textup{by Lemma~\ref{v_1av_2av_3=v_1v_2av_3}}\\
&\approx \mathbf v'\mathbf a\,xyx\,\mathbf b\mathbf w_k\,xy\,\mathbf v_k\mathbf w_k'&&\textup{by~\eqref{xxytxy=xyxtxy}}\\
&\approx\mathbf v'\mathbf a\,yx\,\mathbf b\mathbf w_k\mathbf v_k\mathbf w_k'&&\textup{by Lemma~\ref{v_1av_2av_3=v_1v_2av_3}}\\
&=\mathbf v
\end{align*}
hold in $\mathbf O\{\eqref{xxytxy=xyxtxy}\}$, whence $\mathbf O\{\mathbf u\approx \mathbf v\}=\mathbf O\{\mathbf u\approx \mathbf w,\,~\eqref{xxytxy=xyxtxy}\}$.

\smallskip

\emph{Subcase }3.2: $|\con(\mathbf u_j)\cap \{x,y\}|\le 1$ for any $j>i$. Then there exists a subsequence $k_1,k_2,\dots, k_r=m+1$ of $i+1,i+2,\dots,m+1$ such that the claim~\eqref{k_1 emptyset} is true and either the claims~\eqref{k_s odd x} and~\eqref{k_s even y} are true or the claims~\eqref{k_s even x} and~\eqref{k_s odd y} are true. Then
$$
\begin{aligned}
&\mathbf u(x,y,t_{k_1},t_{k_2},\dots,t_{k_{r-1}})=x^{c+1}y\prod_{j=1}^{r-1} (t_{k_j}\mathbf f_j),\\
&\mathbf v(x,y,t_{k_1},t_{k_2},\dots,t_{k_{r-1}})=x^cyx\prod_{j=1}^{r-1} (t_{k_j}\mathbf f_j)
\end{aligned}
$$
where $c\in \mathbb N$ and either~\eqref{f_j 1} or~\eqref{f_j 2} holds. The identities
$$
\begin{aligned}
&\mathbf u(x,y,t_{k_1},t_{k_2},\dots,t_{k_{r-1}})\stackrel{\eqref{xyx=xyxx}}\approx x^2y\prod_{j=1}^{r-1} (t_{k_j}\mathbf f_j),\\
&\mathbf v(x,y,t_{k_1},t_{k_2},\dots,t_{k_{r-1}})\stackrel{\eqref{xzxyxty=xzyxty}}\approx xyx\prod_{j=1}^{r-1} (t_{k_j}\mathbf f_j)
\end{aligned}
$$
hold in $\mathbf O$. Since the letter $y$ is multiple in $\mathbf u$ and $\mathbf v$, we have that $r>2$ whenever~\eqref{f_j 1} holds and $r>1$ otherwise. In view of Lemma~\ref{second occurrences}, 
$$
\mathbf O\bigl\{x^2y\prod_{j=1}^{r-1} (t_{k_j}\mathbf f_j)\approx xyx\prod_{j=1}^{r-1} (t_{k_j}\mathbf f_j)\bigr\}=\mathbf O\{\gamma\}
$$
where $\gamma=\gamma_{r-2}$ whenever~\eqref{f_j 1} holds and $\gamma=\gamma_{r-1}'$ whenever~\eqref{f_j 2} holds. Therefore, $\mathbf O\{\mathbf u\approx \mathbf v\}$ satisfies $\gamma$. On the other hand, the identities
\begin{align*}
\mathbf w&{}=\mathbf v'\mathbf a \,xy\,\mathbf b\mathbf v''\\
&{}\approx\mathbf v'\mathbf a \,x^2y\,\mathbf b\mathbf v''&&\textup{by~\eqref{xyx=xyxx}}\\
&\approx\mathbf v'\mathbf a\,xyx\,\mathbf b\mathbf v'''&&\textup{by the identity }\gamma\\
&\approx\mathbf v'\mathbf a\,yx\,\mathbf b\mathbf v'''&&\textup{by Lemma~\ref{v_1av_2av_3=v_1v_2av_3}}\\
&=\mathbf v
\end{align*}
hold in $\mathbf O\{\gamma\}$, whence $\mathbf O\{\mathbf u\approx \mathbf v\}=\mathbf O\{\mathbf u\approx \mathbf w,\,\gamma\}$.

\smallskip

So, we have proved that
$$
\text{either }\mathbf O\{\mathbf u\approx \mathbf v\}=\mathbf O\{\mathbf u\approx \mathbf w\} \text{ or } \mathbf O\{\mathbf u\approx \mathbf v\}=\mathbf O\{\mathbf u\approx \mathbf w,\,\sigma\}
$$
for some $\sigma\in\Phi$. The identity $\mathbf u\approx \mathbf w$ is \mbox{$(n-1)$}-invertible. By the induction assumption,
$
\mathbf O\{\mathbf u\approx\mathbf w\}=\mathbf O\{\Sigma\}
$
for some $\Sigma\subseteq\Phi$. It follows that
$
\mathbf O\{\mathbf u\approx\mathbf v\}=\mathbf O\{\Sigma,\sigma\}
$
whenever $\mathbf O\{\mathbf u\approx \mathbf v\}=\mathbf O\{\mathbf u\approx \mathbf w,\sigma\}$ and $\mathbf O\{\mathbf u\approx\mathbf v\}=\mathbf O\{\Sigma\}$ otherwise.
Lemma~\ref{basis for subvarieties of O} is proved.
\end{proof}

\begin{proof}[Proof of Proposition~\ref{O is HFB}.]
Let $\mathbf V$ be a subvariety of $\mathbf O$. If $\mathbf E\nsubseteq \mathbf V$ then $\mathbf V$ is finitely based by Lemmas~\ref{K and Q} and~\ref{E and F nsubseteq X}(i). If $\mathbf F\nsubseteq \mathbf V$ then Lemmas~\ref{K and Q} and~\ref{E and F nsubseteq X}(ii) imply that $\mathbf V$ is finitely based again. So, we may assume that $\mathbf F\vee\mathbf E\subseteq \mathbf V$. In view of Lemma~\ref{reduction to balanced identities}, the variety $\mathbf V$ can be given within the variety $\mathbf O$ by a set of well-balanced identities $\Sigma$ together with the identity~\eqref{xyxztx=xyxzxtx}. According to Lemma~\ref{basis for subvarieties of O}, $\mathbf O\{\Sigma\}=\mathbf O\{\Psi\}$ for some $\Psi\subseteq \Phi$, where $\Phi$ has the same sense as in the proof of Lemma~\ref{basis for subvarieties of O}. Evidently, the identities $\alpha_{n+1}$, $\beta_{n+1}$, $\gamma_{n+1}$ and $\gamma_{n+1}'$ follow from the identities $\alpha_n$, $\beta_n$, $\gamma_n$ and $\gamma_n'$ respectively for any $n\in\mathbb N$. Therefore, the monoid variety given by the identity system $\Psi$ is finitely based. Then $\mathbf V$ is finitely based too.
\end{proof}

\section{Proof of Theorem~\ref{main result}}
\label{proof of Theorem}

To prove Theorem~\ref{main result}, we need one auxiliary result. 

\begin{lemma}
\label{does not contain J}
Let $\mathbf V$ be a monoid variety that contains the variety $\mathbf F\vee\mathbf E$ and satisfies the identity~\eqref{xyx=xyxx}. If $\mathbf V$ does not contain $\mathbf J$ then $\mathbf V$ satisfies~\eqref{xzxyxty=xzyxty}. 
\end{lemma}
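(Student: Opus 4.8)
The plan is to argue by contraposition: assuming that $\mathbf V$ does not satisfy~\eqref{xzxyxty=xzyxty}, I will show $\mathbf J\subseteq\mathbf V$. The engine for this is the membership criterion of Lemma~\ref{S_tau in V}. Let $\rho$ be the fully invariant congruence of $\mathbf J$ (so $\mathbf p\,\rho\,\mathbf q$ exactly when $\mathbf J\models\mathbf p\approx\mathbf q$), and recall from~\cite{Gusev-19+} that $\mathbf J$ is generated by monoids of the form $S_\rho(W)$, where $W$ runs over subword-closed unions of $\rho$-classes built from the critical words $x\,z_{1\pi}z_{2\pi}\cdots z_{n\pi}\,x\,\prod_{i=1}^n t_iz_i$ occurring on the left-hand sides of the defining identities of $\mathbf J$. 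Since each such $S_\rho(W)$ lies in $\mathbf J$ as a homomorphic image of the relatively free monoid $F^1/\rho$, we will have $\mathbf J\subseteq\mathbf V$ as soon as every one of these monoids lies in $\mathbf V$; by Lemma~\ref{S_tau in V} this reduces to checking that every word of each $W$ is a $\rho$-term for $\mathbf V$.

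Before the main verification I would record two framing facts. First, a direct application of Lemma~\ref{word problem F vee dual to E} shows that each defining identity of $\mathbf J$ --- the identities~\eqref{xyx=xyxx}--\eqref{xyxztx=xyxzxtx} together with the whole family $x\,z_{1\pi}z_{2\pi}\cdots z_{n\pi}\,x\,\prod_{i=1}^n t_iz_i\approx x^2\,z_{1\pi}z_{2\pi}\cdots z_{n\pi}\,\prod_{i=1}^n t_iz_i$ --- fulfils the four conditions~\eqref{sim(u)=sim(v) & mul(u)=mul(v)}--\eqref{t(u,x)=t(v,x)}, so it holds in $\mathbf F\vee\mathbf E$; hence $\mathbf F\vee\mathbf E\subseteq\mathbf J$, and in particular the hypothesis $\mathbf F\vee\mathbf E\subseteq\mathbf V$ guarantees that every identity of $\mathbf V$ already holds in $\mathbf F\vee\mathbf E$. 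Second, $\mathbf J$ itself violates~\eqref{xzxyxty=xzyxty}: if it did not, then from~\eqref{xxyy=yyxx} and~\eqref{xzxyxty=xzyxty} we would get $\mathbf J\subseteq\mathbf O$, and Proposition~\ref{O is HFB} would force $\mathbf J$ to be finitely based, contradicting that $\mathbf J$ is a limit variety. The conceptual content of~\eqref{xzxyxty=xzyxty} is that it deletes a ``middle'' occurrence of $x$ across a \emph{simple} letter $z$ --- a move forbidden in $\mathbf J$, even though the family above permits the analogous moves across \emph{repeated} letters.

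The core is the verification that each word $\mathbf w$ of $W$ is a $\rho$-term for $\mathbf V$. Suppose $\mathbf V\models\mathbf w\approx\mathbf w'$. Because $\mathbf F\vee\mathbf E\subseteq\mathbf V$, the identity $\mathbf w\approx\mathbf w'$ holds in $\mathbf F\vee\mathbf E$, so Lemma~\ref{word problem F vee dual to E} furnishes the conditions~\eqref{sim(u)=sim(v) & mul(u)=mul(v)}--\eqref{t(u,x)=t(v,x)} and, by Lemma~\ref{decompositions of u and v are equivalent}, the decompositions of $\mathbf w$ and $\mathbf w'$ carry the same dividers. It remains to upgrade this to $\mathbf w\,\rho\,\mathbf w'$, that is, to $\mathbf J\models\mathbf w\approx\mathbf w'$, using the word problem for $\mathbf J$ supplied by~\cite{Gusev-19+}. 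The only way an identity satisfying~\eqref{sim(u)=sim(v) & mul(u)=mul(v)}--\eqref{t(u,x)=t(v,x)} on a word of the critical shape can fail in $\mathbf J$ is through a discrepancy in the placement of some occurrence of a multiple letter relative to a simple divider. I would show that any such discrepancy between $\mathbf w$ and $\mathbf w'$, after substituting the empty word for all but three suitably chosen letters and tidying up with~\eqref{xyx=xyxx}, specialises precisely to an instance of~\eqref{xzxyxty=xzyxty}; since $\mathbf V$ is a variety satisfying both $\mathbf w\approx\mathbf w'$ and~\eqref{xyx=xyxx}, this instance would hold in $\mathbf V$, contradicting $\mathbf V\not\models\eqref{xzxyxty=xzyxty}$. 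Hence no discrepancy occurs, $\mathbf w\,\rho\,\mathbf w'$, and $\mathbf w$ is a $\rho$-term for $\mathbf V$.

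I expect the last step to be the main obstacle. One must first isolate, from the word problem for $\mathbf J$ in~\cite{Gusev-19+}, the single extra condition that separates $\mathbf J$-identities from $\mathbf F\vee\mathbf E$-identities, and then prove that \emph{every} way of breaking that condition while keeping~\eqref{sim(u)=sim(v) & mul(u)=mul(v)}--\eqref{t(u,x)=t(v,x)} intact reduces, under a suitable substitution and~\eqref{xyx=xyxx}, to the forbidden identity~\eqref{xzxyxty=xzyxty}. Carrying this out uniformly over all $n\in\mathbb N$ and all $\pi\in S_n$, rather than for a single fixed critical word, is where the real bookkeeping lies.
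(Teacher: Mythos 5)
Your overall strategy---connect non-containment of $\mathbf J$ to term properties via Lemma~\ref{S_tau in V}, and force the offending identity to specialise to~\eqref{xzxyxty=xzyxty}---is the right skeleton, and working in contrapositive is logically equivalent to what the paper does. But there are two genuine gaps. First, the generator result you invoke is not a usable statement. You take $\rho$ to be the fully invariant congruence of $\mathbf J$ and let $W$ run over vaguely described ``subword-closed unions of $\rho$-classes built from the critical words,'' attributing this to~\cite{Gusev-19+}. With $\rho$ the fully invariant congruence, the assertion ``every word of $W$ is a $\rho$-term for $\mathbf V$'' just unwinds to ``every identity of $\mathbf V$ among words of $W$ holds in $\mathbf J$''; if $W$ is rich enough that the monoids $S_\rho(W)$ generate $\mathbf J$, this is essentially the statement $\mathbf J\subseteq\mathbf V$ itself, so the reduction through Lemma~\ref{S_tau in V} buys nothing and the argument is circular. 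What makes the method work in the paper is the concrete input from~\cite[Theorem~5.1(x)]{Sapir-20+}: $\mathbf J$ is generated by the \emph{single} monoid $S_\tau(\{xzyx^kty^\ell\mid k,\ell\in\mathbb N\}^\le)$, where $\tau$ is the purely combinatorial ``same reduced word'' congruence, not the fully invariant congruence of $\mathbf J$. It is precisely the smallness of this $W$ and the concreteness of $\tau$ that turn ``some word of $W$ is not a $\tau$-term for $\mathbf V$'' into a tractable finite case analysis.

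Second, the core of the proof is deferred rather than done. Your sentence ``I would show that any such discrepancy \dots specialises precisely to an instance of~\eqref{xzxyxty=xzyxty}'' \emph{is} the lemma; everything before it is framing. The paper carries this out as follows: a failing word $\mathbf u$ must contain a block with two distinct letters (by Theorem~5.1(vii) and Fact~5.3(ii) of~\cite{Sapir-20+}, using $\mathbf F\vee\mathbf E\subseteq\mathbf V$), which pins $\mathbf u$ down to one of the three forms $xzyx^kty^\ell$, $zyx^mty^\ell$, $yx^mty^\ell$; the two shorter forms are reduced to the first by multiplying the identity on the left; then Lemma~\ref{word problem F vee dual to E} forces $\mathbf v$ to have the shape $xzx^pyx^qty^r$, and a short derivation using~\eqref{xyx=xyxx} (split into the cases $q\ge1$ and $q=0$) produces~\eqref{xzxyxty=xzyxty} in $\mathbf V$. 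None of this narrowing is present in your proposal, and without the Sapir classification of which words can fail to be $\tau$-terms you have no handle on ``every way of breaking that condition,'' so the bookkeeping you flag as the main obstacle cannot even be started in your framework. As it stands, the proposal is a plausible plan whose essential content is missing.
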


\begin{proof}
The words $\mathbf p$ and $\mathbf q$ are of \emph{the same type} if $\mathbf p$ can be obtained from $\mathbf q$ by changing the individual exponents of letters and the second occurrence of $a$ is next to the first occurrence of $a$ in $\mathbf p$ if and only if the second occurrence of $a$ is next to the first occurrence of $a$ in $\mathbf q$ for any letter $a$. For example, the words $xy^4xz^3x^5y$ and $xy^3x^4z^2x^2y^2$ are of the same type. A word $\mathbf w$ is called \emph{reduced} if $a\notin\con(\mathbf p)$ whenever $\mathbf w=\mathbf pa^2\mathbf q$ for any $a\in\con(\mathbf w)$. It is shown in~\cite[Section~3]{Sapir-20+} that for any word $\mathbf w$ there is a unique reduced word $r(\mathbf w)$ such that $\simple(\mathbf w)=\simple(r(\mathbf w))$ and the words $\mathbf w$ and $r(\mathbf w)$ are of the same type. This implies that the relation $\tau$ given by $\mathbf u\,\tau\,\mathbf v$ if and only if $r(\mathbf u)=r(\mathbf v)$ is a congruence on $F^1$. Let $W^\le$ denote the set of all subwords of a set of words $W$.

In view of~\cite[Theorem~5.1(x)]{Sapir-20+}, the variety $\mathbf J$ is generated by the monoid $S_\tau(\{xzyx^kty^\ell\mid k,\ell\in\mathbb N\}^\le)$. Then a word $\mathbf u$ in $\{xzyx^kty^\ell\mid k,\ell\in\mathbb N\}^\le$ is not a $\tau$-term for $\mathbf V$ by Lemma~\ref{S_tau in V}. This means that $\mathbf V$ satisfies an identity $\mathbf u\approx\mathbf v$ with $(\mathbf u,\mathbf v)\notin\tau$. Since $\mathbf F\vee\mathbf E\subseteq\mathbf V$, Theorem~5.1(vii) and Fact~5.3(ii) of~\cite{Sapir-20+} imply that the word $\mathbf u$ contains a block with two distinct letters. Then $\mathbf u\in\{xzyx^kty^\ell,\,zyx^mty^\ell,\,yx^mty^\ell \mid k,\ell\in\mathbb N,\,m\ge2\}$.

If $\mathbf u=yx^mty^\ell$ then $\mathbf v=x^pyx^qty^r$ for some $p,r\in\mathbb N$ and $q\ge0$ by Lemma~\ref{word problem F vee dual to E}. It follows that $\mathbf V$ satisfies $xzyx^mty^\ell\approx xz\mathbf v$, whence the word $xzyx^mty^\ell$ is not a $\tau$-term for $\mathbf V$. If $\mathbf u=zyx^mty^\ell$ then $\mathbf v=zx^pyx^qty^r$ for some $p,r\in\mathbb N$ and $q\ge0$ by Lemma~\ref{word problem F vee dual to E}. It follows that $\mathbf V$ satisfies $xzyx^mty^\ell\approx x\mathbf v$, whence the word $xtyx^mzy^\ell$ is not a $\tau$-term for $\mathbf V$ again. So, we may assume that $\mathbf u=xzyx^kty^\ell$. Then $\mathbf v=xtx^pyx^qty^r$ for some $p,r\in\mathbb N$ and $q\ge0$ by Lemma~\ref{word problem F vee dual to E}. If $q\ge1$ then $\mathbf V$ satisfies the identities
$$
xzyxty\stackrel{\eqref{xyx=xyxx}}\approx xzyx^kty^\ell\approx xtx^pyx^qty^r\stackrel{\eqref{xyx=xyxx}}\approx xtxyxty,
$$
and we are done. If $q=0$ then the identities
$$
xzyxty\stackrel{\eqref{xyx=xyxx}}\approx xzyx^{k+1}ty^\ell\approx xtx^pyxty^r\stackrel{\eqref{xyx=xyxx}}\approx xtxyxty,
$$
hold in $\mathbf V$, and we are done again.
\end{proof}

\begin{proof}[Proof of Theorem~~\ref{main result}] Let $\mathbf V$ be a limit variety within the class $\mathbf A_\mathsf{com}$. If $S(xyx)\in\mathbf V$ then $\mathbf V\in \{\mathbf L,\mathbf M\}$ by~\cite[Theorem 3.2]{Lee-12}. Suppose now that $S(xyx)\notin\mathbf V$. In view of Lemma~\ref{S(xyx) notin V}, $\mathbf V$ satisfies one of the identities~\eqref{xyx=xyxx} or~\eqref{xyx=xxyx}. By symmetry, we may assume without any loss that $\mathbf V$ satisfies~\eqref{xyx=xyxx}. It is well known and can be easily verified that every variety from $\mathbf A_\mathsf{com}$ satisfies the identity $x^ny^n\approx y^nx^n$ for some $n\in\mathbb N$. This identity together with~\eqref{xyx=xyxx} implies~\eqref{xxyy=yyxx}. According to Lemmas~\ref{K and Q} and~\ref{E and F nsubseteq X}, $\mathbf F\vee\mathbf E\subseteq \mathbf V$. Suppose that $\mathbf V\ne\mathbf J$. Then Lemma~\ref{does not contain J} implies that  $\mathbf V$ satisfies the identity~\eqref{xzxyxty=xzyxty}. Thus, $\mathbf V\subseteq\mathbf O$. We obtain a contradiction with Proposition~\ref{O is HFB}.

Theorem~\ref{main result} is proved.
\end{proof}

\subsection*{Acknowledgments.} The author is sincerely grateful to Professor Boris Vernikov for his assistance in the writing of the manuscript, to Dr. Edmond W.H. Lee for several valuable remarks for improving the paper and to an anonymous referee for the suggestion of Lemma~\ref{does not contain J} and helpful comments.

\small

\end{document}